\newenvironment{proof}{\par\noindent{\bf Proof.}}{\par\rightline{$\blacksquare$}}
\newtheorem{thm}{Theorem}[section]
 \newtheorem{cor}[thm]{Corollary}
 \newtheorem{lem}[thm]{Lemma}
 \newtheorem{rem}[thm]{Remark}
 \numberwithin{equation}{section}
\begin{document}

\title{\bf Bounds for stochastic processes on product index spaces}
\author{{Witold Bednorz}
\footnote{{\bf Subject classification:}Primary  60G15; Secondary 60G17}
\footnote{{\bf Keywords and phrases:}VC classes, shattering dimension, stochastic inequalities}
\footnote{Institute of Mathematics, University of Warsaw, Banacha 2, 02-097 Warsaw, Poland}
\footnote{Research partially supported by  MNiSW Grant N N201 608740 and Mobility Plus}}
\date{\today}

\maketitle

\begin{abstract}
In this paper we discuss the question how to bound supremum of a stochastic process with the index set of a product type. There is a tempting idea to approach the question by the analysis of the process on each of the marginal index spaces separately.
However it turns out that we also need to study suitable partitions of the whole index space. 
We show what can be done in this direction and how to use the method to reprove some known results. In particular we observe that all known applications of the Bernoulli Theorem
can be obtained in this way, moreover we use the shattering dimension to slightly
extend the application to VC classes. We also show some application
to the regularity of paths for processes which take values in vector spaces.   
Finally we give a short proof of the Mendelson-Paouris result on sums of squares
for empirical processes. 
\end{abstract}

\section{Introduction}
In this paper $I$ denotes a countable set and $(F,\|\cdot\|)$ a separable Banach space.
Consider the class $\mathcal{A}$ of subsets of $I$. We say that the class $\mathcal{A}$ \textit{satisfies the maximal inequality}
if for any symmetric independent random variables $X_i$, $i\in I$ taking values in $F$ 
the following inequality holds
\begin{equation}\label{nier:1}
\mathbf{E} \sup_{A\in \mathcal{A}}\left\|\sum_{i\in A} X_i\right\|\leqslant K\mathbf{E} \left\|\sum_{i\in I}X_i\right\|,
\end{equation}
where $K$ depends on $\mathcal{A}$ only. We stress that in this paper $K$ will be used to 
denote constants that appear in the formulation of our results and may depend on 
their structure. We use $c,C,L,M$ to denote absolute constants which
may change their values from line to line by numerical factors. Also we write $\sim$
to express that two quantities are comparable up to a universal constant.
This will help us to reduce
the notation applied in this paper. It is an easy observation to see that (\ref{nier:1}) is equivalent to
\begin{equation}\label{nier:2}
\mathbf{E} \sup_{A\in \mathcal{A}}\left\|\sum_{i\in A} v_i\varepsilon_i\right\|\leqslant K\mathbf{E}\left\|\sum_{i\in I}v_i\varepsilon_i\right\|,
\end{equation}
where $(v_i)_{i\in I}$, consists of vectors in $F$ and $(\varepsilon_i)_{i\in I}$ is a Bernoulli sequence, i.e. a sequence of independent r.v.'s such that $\mathbf{P}(\varepsilon_i=\pm 1)=\frac{1}{2}$.
\smallskip

\noindent
To understand what is the right characterization 
of such classes $\mathcal{A}$ we have to recall the notion of VC dimension. We say that $\mathcal{A}$ has VC dimension $d$ if there exists a set $B\subset I$, $|B|=d$ 
such that $|\{B\cap A:\;A\in \mathcal{A}\}|=2^d$ but for all $B\subset I$, $|B|>d$, $|\{B\cap A:\;A\in \mathcal{A}\}|<2^{d+1}$.  
It means that $\mathcal{A}$ shatters some set $B$ of cardinality $d$ but do not shatter any set of cardinality $d+1$. 
The result which has been proved in \cite{Bed1} as a corollary of
the Bernoulli Theorem states
that finite VC dimension is the necessary and sufficient condition for the class $\mathcal{A}$ to have the property (\ref{nier:1}). Since our paper strongly refers to
the Bernoulli Theorem we recall its formulation. We start from mentioning Talagrand's result for 
Gaussian's processes. In order to find two-sided 
bounds for supremum of the process $G(t)=\sum_{i\in I}t_ig_i$, where $t\in T\subset \ell^2(I)$ and $(g_i)_{i\in I}$ is a Gaussian sequence, i.e. a sequence of independent standard Gaussian r.v.'s  we
need Talagrand's $\gamma_2(T)$ numbers, cf. Definition 2.2.19 in \cite{Tal1} 
or (\ref{nude}) below. By the well known Theorem 2.4.1 in \cite{Tal1} we have 
\begin{equation}\label{lasek}
\mathbf{E}\sup_{t\in T}G(t) \sim \gamma_2(T).
\end{equation}
The Bernoulli Theorem, i.e. Theorem 1.1 in \cite{Bed1}, concerns a similar question for processes of random signs.
\begin{thm}\label{tw:0}
Suppose that $T\subset \ell^2(I)$. Then
$$
\mathbf{E}\sup_{t\in T}\sum_{i\in I}t_i\varepsilon_i\sim \inf_{T\subset T_1+T_2}\left(\sup_{t\in T_1}\|t\|_1+\gamma_2(T_2)\right).
$$
where the infimum is taken over all decompositions $T_1+T_2=\{t_1+t_2:\;t_1\in T_1,t_2\in T_2\}$ that contain the set $T$ and $\|t\|_1=\sum_{i\in I}|t_i|$.
\end{thm}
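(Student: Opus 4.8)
The inequality $\lesssim$ is the elementary half, so I would dispose of it first. Fix any decomposition $T\subset T_1+T_2$ and write $t=t^{(1)}+t^{(2)}$ with $t^{(1)}\in T_1$, $t^{(2)}\in T_2$. Since $|\varepsilon_i|=1$,
$$
\sum_{i\in I}t^{(1)}_i\varepsilon_i\leqslant\sum_{i\in I}|t^{(1)}_i|=\|t^{(1)}\|_1\leqslant\sup_{s\in T_1}\|s\|_1,
$$
so $\mathbf{E}\sup_{t\in T}\sum_i t_i\varepsilon_i\leqslant\sup_{s\in T_1}\|s\|_1+\mathbf{E}\sup_{s\in T_2}\sum_i s_i\varepsilon_i$. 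For the second term I would compare signs with Gaussians: conditioning on the signs of a Gaussian sequence (which are distributed as the $\varepsilon_i$) and using $\mathbf{E}|g_i|=\sqrt{2/\pi}$ together with Jensen's inequality gives $\mathbf{E}\sup_{s\in T_2}\sum_i s_i\varepsilon_i\leqslant\sqrt{\pi/2}\,\mathbf{E}\sup_{s\in T_2}\sum_i s_i g_i$, which by (\ref{lasek}) is $\sim\gamma_2(T_2)$. Taking the infimum over all such decompositions closes this direction.

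\smallskip

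\noindent The substance of the theorem is the reverse inequality: boundedness of the Bernoulli process must be \emph{witnessed} by such a decomposition. Writing $b(T)=\mathbf{E}\sup_{t\in T}\sum_{i\in I}t_i\varepsilon_i$, the goal is to manufacture $T_1,T_2$ with $T\subset T_1+T_2$, $\sup_{t\in T_1}\|t\|_1\leqslant Lb(T)$ and $\gamma_2(T_2)\leqslant Lb(T)$ for a universal constant $L$. After reducing to finite $I$ and finite $T$ (and recovering the general case by a compactness/limiting argument), I would build the two sets by a partitioning scheme in the style of Talagrand's construction of $\gamma_2$: produce an increasing sequence of partitions $(\mathcal{A}_n)_{n\geqslant 0}$ of $T$ with $|\mathcal{A}_n|\leqslant 2^{2^n}$, each equipped with chosen centers, so that the chaining increments along the tree can be summed.

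\smallskip

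\noindent The genuinely new phenomenon, invisible in the Gaussian theory, is that the Bernoulli process can be far smaller than its Gaussian analogue precisely because $\|\varepsilon\|_\infty=1$ is bounded: on the $\ell^1$-ball one has $\sup_t\sum_i t_i\varepsilon_i\leqslant\sup_t\|t\|_1$, whereas the corresponding Gaussian supremum can be larger by a logarithmic factor. The decomposition must therefore pinpoint the directions in which boundedness, rather than sub-Gaussian concentration, supplies the control. Accordingly, at each scale of the partition I would split the coordinates $i\in I$ into those on which the members of a given cell spread out (large $|t_i|$) and those on which they remain small. The spread-out coordinates are peeled off and charged to the $\ell^1$-budget, assembling $T_1$; on the small coordinates the increments are authentically sub-Gaussian, so ordinary $\gamma_2$-chaining applies and assembles $T_2$. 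The geometric decay of the $\ell^2$-diameters along the tree, combined with the entropy bound $|\mathcal{A}_n|\leqslant 2^{2^n}$, bounds $\gamma_2(T_2)$ by $Lb(T)$, while controlling how often each coordinate is peeled keeps $\sup_{t\in T_1}\|t\|_1$ comparable to $b(T)$.

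\smallskip

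\noindent The crux — and the step I expect to absorb essentially all the difficulty — is the single-step partition lemma driving the induction: given a cell $A$ with $b(A)$ under control, one must split $A$ into boundedly many subcells of substantially smaller $\ell^2$-diameter while spending only an $O(b(A))$ amount of $\ell^1$-mass on the separated coordinates. This requires a growth-type condition relating $b(A)$ to the geometry of $A$, and proving it demands a delicate interplay between concentration of Bernoulli sums on the small-coordinate part and a direct, non-Gaussian estimate on the large-coordinate part, fused through a carefully engineered functional. This is exactly the resolution of the Bernoulli Conjecture, and it is where the depth of the statement lies.
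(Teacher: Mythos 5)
The paper does not actually prove Theorem \ref{tw:0}: it is quoted verbatim as Theorem 1.1 of \cite{Bed1} (Bednorz--Lata{\l}a, \emph{Ann.\ Math.}\ 2014), i.e.\ the resolution of the Bernoulli Conjecture, and is used in the paper only as a black box. So there is no in-paper proof to compare against. Your treatment of the upper bound $\lesssim$ is correct and is the standard argument: split $t=t^{(1)}+t^{(2)}$, bound the $T_1$ contribution by $\sup_{t\in T_1}\|t\|_1$ using $|\varepsilon_i|=1$, and dominate the $T_2$ contribution by the Gaussian supremum via $g_i=\varepsilon_i|g_i|$, Jensen, and $\mathbf{E}|g_i|=\sqrt{2/\pi}$, then invoke (\ref{lasek}). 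This matches exactly the computations the paper itself performs when it \emph{applies} the theorem (e.g.\ the chain of inequalities following (\ref{nier:5})).

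For the lower bound, what you have written is an accurate description of the Bednorz--Lata{\l}a strategy (chaining along admissible partitions, peeling large coordinates into an $\ell^1$ budget, sub-Gaussian control on the remaining coordinates, all driven by a single-step partition lemma resting on a growth condition for a bespoke functional), but it is a roadmap, not a proof. The entire difficulty of the theorem is concentrated in the partition lemma you name and do not establish: constructing, for a cell $A$ with $b(A)$ controlled, a split into boundedly many subcells of smaller diameter at an $\ell^1$ cost of $O(b(A))$, with uniform control on how often each coordinate is charged. Without that lemma the decomposition $T\subset T_1+T_2$ with $\sup_{t\in T_1}\|t\|_1+\gamma_2(T_2)\leqslant Lb(T)$ is not produced, and no elementary shortcut is known. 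You are candid that this is where the depth lies; just be aware that, as a proof, the hard direction remains entirely open in your write-up, and that the paper under review handles this by citation rather than by argument, which is the appropriate course here as well.
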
 
Note that if $0\in T$ then we can also require that $0\in T_2$ in the above result.
The consequence of Theorem \ref{tw:0} to our problem with maximal inequalities is as follows.
\begin{thm}\label{tw:1}
The class $\mathcal{A}$ satisfies (\ref{nier:1}) with a finite constant $K$ if and only if 
$\mathcal{A}$ is a VC class of a finite dimension. Moreover the square root of the dimension is up to a universal constant comparable with the optimal value of $K$.
\end{thm}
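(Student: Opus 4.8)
The plan is to pass to the Bernoulli formulation and read both sides of (\ref{nier:2}) as Bernoulli processes over product index sets, then feed them into Theorem \ref{tw:0}. First I would use the stated equivalence of (\ref{nier:1}) and (\ref{nier:2}) together with the separability of $F$: fix a countable norming family $(f_k)\subset F^*$ with $\|f_k\|\le 1$ and $\|x\|=\sup_k f_k(x)$. Writing $t^k=(f_k(v_i))_{i\in I}\in\ell^2(I)$ and $T=\{t^k:k\}$, the right-hand side of (\ref{nier:2}) becomes $\mathbf{E}\sup_{t\in T}\sum_i t_i\varepsilon_i$, while the left-hand side becomes $\mathbf{E}\sup_{s\in\widetilde T}\sum_i s_i\varepsilon_i$ with $\widetilde T=\{\mathbf 1_A\cdot t:\ A\in\mathcal A,\ t\in T\}$, where $\mathbf 1_A\cdot t$ denotes the coordinatewise restriction of $t$ to $A$. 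Thus (\ref{nier:2}) asks precisely that the Bernoulli complexity of $\widetilde T$ be at most $K$ times that of $T$, uniformly in the choice of $(v_i)$ and of $F$. We may also assume $0\in T$, since adding the zero point does not change the Bernoulli complexity of a symmetric process.

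For necessity and the lower bound on $K$, I would exhibit a single bad instance. Suppose $\mathcal A$ shatters a set $B$ with $|B|=d$; take $F=\mathbb R$ and $v_i=\mathbf 1_B(i)$. By Khintchine's inequality the right-hand side is $\mathbf{E}|\sum_{i\in B}\varepsilon_i|\le C\sqrt d$, whereas, choosing for each sign pattern the set $A\in\mathcal A$ with $A\cap B=\{i\in B:\varepsilon_i=1\}$, the left-hand side is at least $\mathbf{E}\,\#\{i\in B:\varepsilon_i=1\}=d/2$. Hence $K\ge c\sqrt d$, so finiteness of $K$ forces $d\le CK^2$; in particular any class satisfying (\ref{nier:1}) has finite VC dimension.

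For sufficiency and the matching upper bound $K\le C\sqrt d$, I would run Theorem \ref{tw:0} on $T$ and transport a near-optimal decomposition to $\widetilde T$. Choose $T\subset T_1+T_2$ with $0\in T_2$ nearly realizing the infimum, so that $\sup_{t\in T_1}\|t\|_1+\gamma_2(T_2)\le C\,\mathbf{E}\|\sum_i v_i\varepsilon_i\|$. Restricting coordinatewise gives $\widetilde T\subset\widetilde T_1+\widetilde T_2$ with $\widetilde T_j=\{\mathbf 1_A\cdot t:A\in\mathcal A,\ t\in T_j\}$. The $\ell^1$ part is immediate, since restriction only deletes coordinates: $\sup_{s\in\widetilde T_1}\|s\|_1\le\sup_{t\in T_1}\|t\|_1$. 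The whole difficulty is therefore concentrated in the single estimate
\begin{equation*}
\gamma_2(\widetilde T_2)\le C\sqrt d\,\gamma_2(T_2),
\end{equation*}
after which Theorem \ref{tw:0} applied to $\widetilde T$, followed by the lower bound of Theorem \ref{tw:0} applied to $T$, gives $\mathbf{E}\sup_{s\in\widetilde T}\sum_i s_i\varepsilon_i\le C\sqrt d\,\mathbf{E}\|\sum_i v_i\varepsilon_i\|$ and closes the argument.

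To prove this $\gamma_2$ restriction bound I would argue by chaining on the product index set, which is exactly the product-space mechanism of the paper. Using (\ref{lasek}) it is equivalent to control the Gaussian process $\mathbf{E}\sup_{A,t}\sum_{i\in A}t_i g_i$ over $\mathcal A\times T_2$. In the metric $\|\mathbf 1_A t-\mathbf 1_{A'}t'\|_2\le\|t-t'\|_2+\|(\mathbf 1_A-\mathbf 1_{A'})t'\|_2$ the product splits into a $T_2$-direction and an $\mathcal A$-direction; along the latter, for fixed weights $t$, Haussler's entropy bound for a VC class of dimension $d$ gives covering numbers $N(\epsilon\|t\|_2)\le(C/\epsilon)^{Ld}$, uniformly across scales, so the $\mathcal A$-chain costs only the scale-free factor $\sqrt d$ per level. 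Carrying out the nested chaining and using that $0\in T_2$, so that $\sup_{t\in T_2}\|t\|_2\le C\gamma_2(T_2)$, the additive contribution of the $\mathcal A$-direction is absorbed into $\sqrt d\,\gamma_2(T_2)$, yielding the displayed inequality. The main obstacle is precisely here: $\gamma_2$ is a genuine chaining functional rather than an entropy integral, so one cannot merely quote Haussler's bound but must verify that the VC structure inflates every level of the chain by the same factor $\sqrt d$ and no more — this is where the analysis of partitions of the full product index set, rather than of the two marginals separately, becomes unavoidable.
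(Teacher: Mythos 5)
Your proof is correct and follows essentially the same route as the paper: the necessity half (shattered set $B$, $v_i=\mathbf{1}_B(i)$, the $\sqrt{d}$ upper bound against the $d/2$ lower bound) is identical, and the sufficiency half uses the same decomposition $T\subset T_1+T_2$ supplied by Theorem \ref{tw:0}, with the $\ell^1$ part absorbed trivially and the Gaussian part carrying the factor $\sqrt{d}$. The only divergence is that where the paper's proof in the introduction simply cites the Gaussian VC maximal inequality (\ref{nier:4}) from Krawczyk and Latala, you re-derive it via product-space chaining combined with the VC entropy bound $\log N(\mathcal{A},d_t,\varepsilon)\leqslant Ld(1+\log(1/\varepsilon))$ --- which is precisely the derivation the paper itself carries out in Section 3 as an application of Corollary \ref{cor:1}.
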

Observe that part of the result is obvious. Namely one can easily show that if $\mathcal{A}$ satisfies  the maximal inequality then it is necessarily
a VC class of a finite dimension. Indeed let $(\varepsilon_i)_{i\in I}$ be a Bernoulli sequence.
Suppose that set $B\subset I$ is shattered. Let $x_i=1$ for $i\in B$ and $x_i=0$, $i\not\in B$. 
Obviously
$$
\mathbf{E} \left|\sum_{i\in I}x_i \varepsilon_i\right|=\mathbf{E} \left|\sum_{i\in B}\varepsilon_i\right|\leqslant \sqrt{|B|}
$$
and on the other hand 
\begin{align*}
\mathbf{E} \sup_{A\in \mathcal{A}}\left|\sum_{i\in A}x_i \varepsilon_i\right|= \mathbf{E} \sup_{A\in \mathcal{A}}\left|\sum_{i\in A\cap B}x_i\varepsilon_i\right|\geqslant
\mathbf{E} \sum_{i\in B} \varepsilon_i 1_{\varepsilon_i=1}=|B|/2.
\end{align*}
Consequently if (\ref{nier:1}) holds then $K\geqslant \sqrt{|B|}/2$. Therefore (\ref{nier:1}) implies 
that cardinality of $B$ must be smaller or equal $4K^2$. 
\smallskip

\noindent
Much more difficult is to prove the 
converse statement, i.e that for each VC class $\mathcal{A}$ of dimension $d$ the inequality (\ref{nier:2}) holds with $K$ comparable with $\sqrt{d}$. 
In order to prove the result one has to first replace
the basic formulation of the maximal inequality - (\ref{nier:2}) by its equivalent version
\begin{equation}\label{nier:3}
\mathbf{E} \sup_{A\in \mathcal{A}}\sup_{t\in T}\left|\sum_{i\in A}t_i\varepsilon_i\right|  \leqslant K\mathbf{E} \sup_{t\in T}\left|\sum_{i\in I}t_i\varepsilon_i\right|,
\end{equation}
where $(\varepsilon_i)_{i\in I}$ is a Bernoulli sequence and $0\in T\subset \ell^2(I)$.
Note that we use absolute values since part of our work concerns 
complex spaces. However it is important to mention that in the real case
$\mathbf{E}\sup_{t\in T}\sum_{i\in I}t_i\varepsilon_i$ is comparable with $\mathbf{E}\sup_{t\in T} |\sum_{i\in I}t_i \varepsilon_i|$ if $0\in T$ and therefore we often require in this paper that $0\in T\subset \ell^2(I)$.
Let us denote
$$
b(T)=\mathbf{E} \sup_{t\in T}\left|\sum_{i\in I}t_i \varepsilon_i\right|,\;\;g(T)=\mathbf{E}\sup_{t\in T}\left|\sum_{i\in I}t_i g_i\right|,
$$ 
where $(\varepsilon_i)_{i\in I}$, $(g_i)_{i\in I}$ are respectively Bernoulli and Gaussian sequence. We recall that, what was known for a long time \cite{Kra, Lat},  (\ref{nier:3}) holds when Bernoulli random variables
are replaced by Gaussians, i.e.
\begin{equation}\label{nier:4}
\mathbf{E} \sup_{A\in \mathcal{A}}\sup_{t\in T}\left|\sum_{i\in A}g_i t_i\right|\leqslant C\sqrt{d}\mathbf{E} \sup_{t\in T}\left|\sum_{i\in I}t_i g_i\right|=C\sqrt{d}g(T),
\end{equation}
for any $0\in T\subset \ell^2(I)$. Due to Theorem \ref{tw:0} one can cover the set $T$ by $T_1+T_2$, where $0\in T_2$ and
\begin{equation}\label{nier:5}
\max\left\{\sup_{t\in T_1}\|t\|_1,g(T_2)\right\}\leqslant Lb(T).
\end{equation}
Therefore using (\ref{nier:4}) and (\ref{nier:5})
\begin{align*}
& \mathbf{E} \sup_{A\in \mathcal{A}}\sup_{t\in T}\left|\sum_{i\in A}\varepsilon_i t_i\right|\\
&\leqslant \sup_{t\in T_1}\|t\|_1+\mathbf{E} \sup_{A\in \mathcal{A}}\sup_{t\in T_2}|\sum_{i\in A}\varepsilon_i t_i| \\
&\leqslant \sup_{t\in T_1}\|t\|_1+\sqrt{\frac{\pi}{2}}\mathbf{E} \sup_{A\in \mathcal{A}}\sup_{t\in T_2}|\sum_{i\in A}t_i \varepsilon_i \mathbf{E} |g_i| | \\
&\leqslant  \sup_{t\in T_1}\|t\|_1+\sqrt{\frac{\pi}{2}}\mathbf{E} \sup_{A\in \mathcal{A}}\sup_{t\in T_2}|\sum_{i\in A} t_i g_i|  \\
& \leqslant \sup_{t\in T_1}\|t\|_1+\sqrt{\frac{\pi}{2}}g(T_2)\leqslant CL\sqrt{d}b(T).
\end{align*}
This proves Theorem \ref{tw:1}. 
\smallskip

\noindent
We show another example in which a similar approach works. Let $G$ be a compact Abelian group and $(v_i)_{i\in I}$ a sequence
of vectors taking values in $F$. Let $\chi_i$, $i\in I$ be characters on $G$. The deep result of Fernique \cite{Fer} is
$$
\mathbf{E} \sup_{h\in G}\left\|\sum_{i\in I}v_i\chi_i(h) g_i\right\|\leqslant C\left(\mathbf{E}\left\|\sum_{i\in I}v_i g_i\right\|+\sup_{\|x^{\ast}\|\leqslant 1}\mathbf{E}\sup_{h\in G}\left|\sum_{i\in I}x^{\ast}(v_i)\chi_i(h)g_i\right|\right).
$$
This can be rewritten similarly as (\ref{nier:4}), i.e. for any $0\in T\subset \ell^2(I)$ (which is a complex space in this case)
\begin{equation}\label{nier:6}
\mathbf{E} \sup_{h\in G}\sup_{t\in T}\left|\sum_{i\in I}t_i\chi_i(h)g_i\right|\leqslant C\left(g(T)+\sup_{t\in T}\mathbf{E} \sup_{h\in G}\left|\sum_{i\in I}t_i\chi_i(h)g_i\right|\right).
\end{equation}
Once again the Bernoulli Theorem allows to prove a similar result for Bernoulli sequences. Namely by Theorem \ref{tw:0}
we get the decomposition $T\subset T_1+T_2$, $0\in T_2$ such that 
\begin{equation}\label{nier:7}
\max\left\{\sup_{t\in T_1}\|t\|_1,g(T_2)\right\}\leqslant Lb(T).
\end{equation}
Consequently using (\ref{nier:6}), (\ref{nier:7}) and $|\chi_i(h)|\leqslant 1$ we get
\begin{eqnarray}
&& \mathbf{E} \sup_{h\in G}\sup_{t\in T}\left|\sum_{i\in I}t_i\chi_i(h)\varepsilon_i\right|\\
&&\leqslant  \sup_{t\in T_1}\|t\|_1+\mathbf{E} \sup_{h\in G}\sup_{t\in T_2}\left|\sum_{i\in A}\varepsilon_i t_i\chi_i(h)\right| \nonumber \\
&&\leqslant \sup_{t\in T_1}\|t\|_1+\sqrt{\frac{\pi}{2}}\mathbf{E} \sup_{h\in G}\sup_{t\in T_2}\left|\sum_{i\in I} t_i\chi_i(h) g_i\right| \nonumber\\
&&\leqslant  \sup_{t\in T_1}\|t\|_1+C\left(g(T_2)+\sup_{t\in T_2}\mathbf{E} \sup_{h\in G}\left|\sum_{i\in I} t_i\chi_i(h) g_i\right|\right) \nonumber  \\
\label{nier:8} && \leqslant CL\left(b(T)+\sup_{t\in T_2}\mathbf{E} \sup_{h\in G}|\sum_{i\in I}t_i \chi_i(h)g_i|\right).
\end{eqnarray} 
The final step is the Marcus-Pisier estimate \cite{M-P} (see Theorem 3.2.12 in \cite{Tal1})
\begin{equation}\label{nier:9}
\sup_{t\in T_2}\mathbf{E} \sup_{h\in G}\left|\sum_{i\in I}t_i \chi_i(h)g_i\right|\leqslant M\sup_{t\in T_2}\mathbf{E} \sup_{h\in G}\left|\sum_{i\in I}t_i \chi_i(h)\varepsilon_i\right|.
\end{equation}
Note that (\ref{nier:9}) is
deeply based on the translational invariance of the distance 
\begin{equation}\label{charon}
d_t(g,h)=\left(\sum_{i\in I}|t_i|^2 |\chi_i(g)-\chi_i(h)|^2\right)^{\frac{1}{2}}\;\;g,h\in G.
\end{equation}
Since we may assume that $T_2\subset T-T_1$ we get
\begin{align}
\nonumber & \sup_{t\in T_2}\mathbf{E}\sup_{h\in G}\left|\sum_{i\in I} t_i\chi_i(h)\varepsilon_i\right|\\
\nonumber & \leqslant \sup_{t\in T}\mathbf{E}\sup_{h\in G}\left|\sum_{i\in I}t_i\chi_i(h)\varepsilon_i\right|+\sup_{t\in T_1}\mathbf{E}\sup_{h\in G}\left|\sum_{i\in I}t_i\chi_i(h)\varepsilon_i\right|\\
\label{nier:9.5} &\leqslant \sup_{t\in T}\mathbf{E}\sup_{h\in G}\left|\sum_{i\in I}t_i\chi_i(h)\varepsilon_i\right|+Lb(T).
\end{align}
Combining (\ref{nier:8}) with (\ref{nier:9}) and (\ref{nier:9.5}) we get the following result.
\begin{thm}\label{tw:2}
Suppose that $0\in T\subset \ell^2(I)$.
For any compact group $G$ and a collection of vectors $v_i\in F$ in a complex Banach space $(F,\|\cdot\|)$ and characters $\chi_i$ on $G$
the following holds
$$
\mathbf{E} \sup_{h\in G}\sup_{t\in T}\left|\sum_{i\in I}t_i\chi_i(h)\varepsilon_i\right|\leqslant K\left(b(T)+\sup_{t\in T}\mathbf{E} \sup_{h\in G}\left|\sum_{i\in I}t_i \chi_i(h)\varepsilon_i\right|\right).
$$ 
\end{thm}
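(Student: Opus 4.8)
The plan is to transfer Fernique's Gaussian estimate (\ref{nier:6}) to the Bernoulli setting using two comparison principles in tandem: the Bernoulli Theorem (Theorem \ref{tw:0}) to pass from Bernoulli to Gaussian on the delicate part of the index set, and the Marcus--Pisier inequality (\ref{nier:9}) to return from Gaussian to Bernoulli on the translation-invariant marginal term. Since all the required estimates have been assembled in the discussion above, the proof reduces to chaining them together in the right order.

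First I would apply Theorem \ref{tw:0} to obtain a decomposition $T\subset T_1+T_2$ with $0\in T_2$ satisfying (\ref{nier:7}), so that $\sup_{t\in T_1}\|t\|_1\le Lb(T)$ and $g(T_2)\le Lb(T)$. Writing each $t\in T$ as $t_1+t_2$, the contribution of $t_1$ is controlled in the strong $\ell^1$ sense: since $|\chi_i(h)|\le 1$ and $|\varepsilon_i|=1$, it is bounded uniformly in $h$ by $\|t_1\|_1\le Lb(T)$. On the part $T_2$ I would replace the signs by Gaussians at the cost of a factor $\sqrt{\pi/2}$ (this uses $\mathbf{E}|g_i|=\sqrt{2/\pi}$ together with Jensen's inequality and $\varepsilon_i|g_i|\stackrel{d}{=}g_i$), and then apply Fernique's estimate (\ref{nier:6}). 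This is precisely the chain ending in (\ref{nier:8}), which bounds the full Bernoulli double supremum by $CL\,b(T)$ plus the residual Gaussian marginal $\sup_{t\in T_2}\mathbf{E}\sup_{h\in G}|\sum_{i\in I}t_i\chi_i(h)g_i|$.

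The main obstacle is this residual Gaussian marginal, which must be converted back to its Bernoulli analogue. In general a Gaussian supremum may strictly dominate the corresponding Bernoulli one, so no elementary comparison is at hand. The saving feature is that for each fixed $t$ the process $h\mapsto\sum_i t_i\chi_i(h)g_i$ is stationary on $G$: its associated distance $d_t$ in (\ref{charon}) is translation-invariant. This is exactly the situation governed by the Marcus--Pisier inequality (\ref{nier:9}), which lets me bound the Gaussian marginal by $M$ times the Bernoulli marginal $\sup_{t\in T_2}\mathbf{E}\sup_{h}|\sum_i t_i\chi_i(h)\varepsilon_i|$. I view this as the heart of the argument.

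It remains to replace the supremum over $T_2$ by one over $T$. Because the decomposition gives $T_2\subset T-T_1$, the triangle inequality bounds the $T_2$ marginal by the $T$ marginal plus $\sup_{t\in T_1}\mathbf{E}\sup_h|\sum_i t_i\chi_i(h)\varepsilon_i|\le\sup_{t\in T_1}\|t\|_1\le Lb(T)$, which is (\ref{nier:9.5}). Combining (\ref{nier:8}), (\ref{nier:9}) and (\ref{nier:9.5}) and absorbing the numerical constants into a single $K$ yields the asserted inequality.
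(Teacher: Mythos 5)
Your argument is correct and is essentially identical to the paper's own derivation: the paper proves Theorem \ref{tw:2} precisely by combining the decomposition from Theorem \ref{tw:0} with (\ref{nier:7}), the chain of inequalities (\ref{nier:8}) using Fernique's estimate (\ref{nier:6}), the Marcus--Pisier bound (\ref{nier:9}), and the passage from $T_2$ back to $T$ in (\ref{nier:9.5}). No gaps; the ordering and the role assigned to each ingredient match the paper exactly.
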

The aim of this note is to explore questions described above in a unified language.
Simply we consider random processes $X(u,t)$, $(u,t)\in U\times T$ with values in $\mathbb{R}$ or $\mathbb{C}$,
which means we study stochastic processes defined on product index sets. 
In particular we cover all canonical processes in this way. Indeed, suppose that 
$U \subset \mathbb{R}^{I}$ or $\mathbb{C}^{I}$ and $T\subset \mathbb{R}^{I}$ or $\mathbb{C}^{I}$ are such that for any $a\in U$ and $t\in T$ we have that $\sum_{i\in I}|u_it_i|^2<\infty$.
Then for any family of independent random variables $X_i$ such that $\mathbf{E} X_i=0$, $\mathbf{E} |X_i|^2=1$, 
$$
X(u,t)=\sum_{i\in I}u_it_iX_i,\;\;u\in U,t\in T
$$ 
is a well defined process. As we have mentioned our main classes of examples concern Gaussian canonical
processes where $X_i=g_i$, $i\in I$ are standard normal variables or Bernoulli canonical processes where $X_i=\varepsilon_i$, $i\in I$
are random signs. In particular we aim to find bounds for $\mathbf{E} \sup_{u\in U}\|\sum_{i\in I}u_i v_i \varepsilon_i\|$, where $v_i\in F$, $i\in I$, formulated in terms of $\mathbf{E}\|\sum_{i\in I}v_i \varepsilon_i\|$. One of the results we state is the application of the shattering dimension introduced by Mendelson and Vershynin \cite{M-V}, which enable us to generalize Theorem \ref{tw:1}. In this way we deduce that under mild conditions imposed on $\mathcal{A}\subset \mathbb{R}^{I}$ we have 
$$
\mathbf{E}\sup_{u\in U}\left\|\sum_{i\in I}u_iv_iX_i\right\|\leqslant K\mathbf{E}\left\|\sum_{i\in I}v_i X_i\right\|
$$ 
for any independent symmetric r.v.'s $X_i$, $i\in I$. We show how to apply the result
in the analysis of convex bodies and their volume in high dimensional spaces.  
On the other hand we can use the approach to study processes $X(t)=(X_i(t))_{i\in I}$, $t\in [0,1]$ which take values in $\mathbb{R}^{I}$ or $\mathbb{C}^{I})$.
For example to check whether paths $t\rightarrow X(t)$ belong to $\ell^2$ we should consider
$$
X(u,t)=\sum_{i\in I}u_i X_i(t),\;\;u=(u_i)_{i\in I}\in U,t\in[0,1],
$$
where $U$ is the unit ball in $\ell^2(I)$, i.e. $U=\{u\in\mathbb{R}^{I}:\;\sum_{i\in I}|u_i|^2\leqslant 1\}$.
The finiteness of $\|X(t)\|_2<\infty$ is equivalent to the finiteness of $\sup_{u\in u}|X(u,t)|$.
Similarly we can treat a well known question in the theory of empirical processes.
Suppose that $(\mathcal{E},\mathcal{B})$ is a measurable space and $\mathcal{F}$ a countable family of measurable real functions on $\mathcal{E}$.
Let $X_1,X_2,\ldots,X_N$ be independent random
variables which take values in $(\mathcal{E},\mathcal{B})$, we may define
$$
X(u,f)=\sum^N_{i=1}u_if(X_i),\;\;u=(u_i)^N_{i=1}\in U,f\in \mathcal{F},
$$ 
where $U=B_N(0,1)=\{u\in \mathbb{R}^N:\;\sum^N_{i=1}|u_i|^2\leqslant 1\}$.
Then it is clear that
$$
\sup_{u\in U}|X(u,f)|^2=\sum^N_{i=1}|f(X_i)|^2,\;\;\mbox{for all}\; f\in \mathcal{F}.
$$
In the last section we give a short proof of Mendelson-Paouris result \cite{Me-P} that
gives an upper bound for $\mathbf{E}\sup_{u\in U}\sup_{f\in \mathcal{F}}|X(u,t)|$.

\section{Upper bounds}

For the sake of completeness we give an idea how to bound stochastic processes. 
The approach we present slightly extends former results of Latala \cite{Lat2,Lat1} and Mendelson-Paouris \cite{Me-P}.
Suppose that $\mathbf{E} |X(t)-X(s)|<\infty$ for all $s,t\in T$.
For each $s,t\in T$ and $n\geqslant 0$ we define $\bar{q}_n(s,t)$ as the smallest $q>0$ such that
\begin{align}
\nonumber & F_{q,n}(s,t)=\mathbf{E} q^{-1}(|X(t)-X(s)|-q)_{+}\\
\label{holy}&=\int^{\infty}_{1}\mathbf{P}(|X(t)-X(s)|>qt)dt\leqslant N_n^{-1}.
\end{align}
We prove the following observation.
\begin{lem}
Function $\bar{q}_n(s,t)$, $s,t\in T$ is a distance on $T$, namely is symmetric, satisfies the triangle inequality and $\bar{q}_n(s,t)=0$ if and only if $X(s)=X(t)$ a.s. 
\end{lem}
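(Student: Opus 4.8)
The plan is to verify the three defining properties of a distance separately, treating symmetry and the vanishing condition as warm-ups and reserving the triangle inequality for the real work. Throughout I would write $Z_{st}=|X(t)-X(s)|$, so that the defining functional reads $q\,F_{q,n}(s,t)=\mathbf{E}(Z_{st}-q)_{+}$. Before anything else I would record that $q\mapsto F_{q,n}(s,t)$ is well behaved: from the representation $F_{q,n}(s,t)=\int_{1}^{\infty}\mathbf{P}(Z_{st}>q\tau)\,d\tau$ it is manifestly nonincreasing in $q$, it is continuous on $(0,\infty)$ by dominated convergence (using $\mathbf{E}Z_{st}<\infty$), it tends to $0$ as $q\to\infty$, and it increases to $+\infty$ as $q\downarrow 0$ whenever $\mathbf{P}(Z_{st}>0)>0$. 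This guarantees that $\{q>0:\;F_{q,n}(s,t)\leqslant N_n^{-1}\}$ is a half-line $[\bar q_n(s,t),\infty)$, so the defining infimum is attained and finite, and in particular $F_{\bar q_n(s,t),n}(s,t)\leqslant N_n^{-1}$.

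Symmetry is immediate, since $Z_{st}=Z_{ts}$ forces $F_{q,n}(s,t)=F_{q,n}(t,s)$ for every $q$, so the two thresholds coincide. For the vanishing condition, if $X(s)=X(t)$ a.s.\ then $Z_{st}=0$ a.s., whence $F_{q,n}(s,t)=0\leqslant N_n^{-1}$ for all $q>0$ and $\bar q_n(s,t)=0$. Conversely, if $\bar q_n(s,t)=0$ then by monotonicity $F_{q,n}(s,t)\leqslant N_n^{-1}$ for every $q>0$; letting $q\downarrow 0$ and applying monotone convergence forces $\mathbf{P}(Z_{st}>0)=0$, i.e.\ $X(s)=X(t)$ a.s.

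The crux is the triangle inequality, and the observation that makes it painless is the identity $q\,F_{q,n}(s,t)=\mathbf{E}(Z_{st}-q)_{+}$, which converts the statement into the elementary subadditivity of the positive part. Writing $a=\bar q_n(s,t)$ and $b=\bar q_n(t,u)$, I would combine the pointwise bound $Z_{su}\leqslant Z_{st}+Z_{tu}$ with monotonicity of $x\mapsto(x-c)_+$ and with $(\alpha+\beta)_+\leqslant\alpha_++\beta_+$ to get, pointwise, $(Z_{su}-(a+b))_{+}\leqslant\bigl((Z_{st}-a)+(Z_{tu}-b)\bigr)_{+}\leqslant(Z_{st}-a)_{+}+(Z_{tu}-b)_{+}$. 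Taking expectations and using $\mathbf{E}(Z_{st}-a)_{+}=a\,F_{a,n}(s,t)\leqslant aN_n^{-1}$ together with the analogous bound for the second term yields $\mathbf{E}(Z_{su}-(a+b))_{+}\leqslant(a+b)N_n^{-1}$, that is $F_{a+b,n}(s,u)\leqslant N_n^{-1}$. By the definition of $\bar q_n$ as the least admissible threshold this gives $\bar q_n(s,u)\leqslant a+b=\bar q_n(s,t)+\bar q_n(t,u)$.

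The one place demanding care is the normalisation, and I expect it to be the only real obstacle. The naive route of bounding $\mathbf{P}(Z_{su}>(a+b)\tau)\leqslant\mathbf{P}(Z_{st}>a\tau)+\mathbf{P}(Z_{tu}>b\tau)$ and integrating loses a factor of $2$ and would only prove the triangle inequality up to a constant; the whole point of working with the \emph{un-normalised} functional $q\,F_{q,n}=\mathbf{E}(Z-q)_{+}$ is that the weights $a$ and $b$ reassemble into $a+b$ and cancel the offending factor exactly. The degenerate cases $a=0$ or $b=0$ need no separate treatment, since then the corresponding term $\mathbf{E}(Z_{st}-a)_{+}$ already vanishes and the same chain of inequalities applies.
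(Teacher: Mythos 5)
Your proposal is correct and follows essentially the same route as the paper: establish that $q\mapsto F_{q,n}$ is continuous and decreasing so the threshold is attained, then prove the triangle inequality by showing $F_{a+b,n}(s,u)\leqslant N_n^{-1}$ via a pointwise estimate on positive parts. Your subadditivity bound $(Z_{su}-(a+b))_+\leqslant (Z_{st}-a)_++(Z_{tu}-b)_+$ is, after dividing by $a+b$, exactly the paper's convexity inequality $(px+qy-1)_+\leqslant p(x-1)_++q(y-1)_+$ with weights $p=a/(a+b)$, $q=b/(a+b)$, so the two arguments coincide up to normalisation.
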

\begin{proof}
Obviously $\bar{q}_n(s,t)$ is finite and symmetric $\bar{q}_n(s,t)=\bar{q}_n(t,s)$. To see that it equals $0$ if and only if $\mathbf{P}(|X(t)-X(s)|>0)>0$ a.s. note that if $X(s)\neq X(t)$ then $\mathbf{E}|X(t)-X(s)|>0$.
The function $q\rightarrow F_{q,n}(s,t)$ is decreasing continuous
and $F_{q,n}(s,t)\rightarrow \infty$ if $q\rightarrow 0$ and $\mathbf{E}|X(t)-X(s)|>0$. Moreover $F_{q,n}(s,t)$ is strictly decreasing on the interval $\{q>0:\;F_{q,n}(s,t)>0\}$ and consequently $\bar{q}(s,t)$ is the unique solution of $F_{q,n}(s,t)=N_n^{-1}$, namely
$$
\mathbf{E} (\bar{q}_n(s,t))^{-1}(|X(t)-X(s)|-\bar{q}_n(s,t))_{+}=N_n^{-1}.
$$
Finally we show that $\bar{q}$ satisfies the triangle inequality. Indeed for any $u,v,w\in T$ either
$\bar{q}(u,v)=0$ or $\bar{q}(v,w)=0$ or $\bar{q}(u,w)=0$ and the inequality is trivial
or all the quantities are positive and then 
$$
F_{\bar{q}_n(u,v),n}(u,v)=F_{\bar{q}_n(v,w),n}(v,w)=F_{\bar{q}_n(u,w),n}(u,w)=N_{n}^{-1}.
$$ 
It suffices to observe
\begin{align*}
& \frac{1}{\bar{q}_n(u,v)+\bar{q}_n(v,w)}\mathbf{E}\left(|X(u)-X(w)|-\bar{q}_n(u,v)-\bar{q}_n(w,v)\right)_{+} \\
&\leqslant \mathbf{E}\left(\frac{|X(u)-X(w)|+|X(w)-X(v)|}{\bar{q}_n(u,v)+\bar{q}_n(w,v)}-1\right)_{+}.
\end{align*}
The function $x\rightarrow (x-1)_{+}$ is convex which implies that 
\begin{equation}\label{gniew}
(px+qy-1)_{+}\leqslant p(x-1)_{+}+q(y-1)_{+}
\end{equation}
for $p,q\geqslant 0$, $p+q=1$ and $x,y>0$.
We use (\ref{gniew}) for
$$
x=\frac{|X(u)-X(v)|}{\bar{q}_n(u,v)},\;\;y=\frac{|X(v)-X(w)|}{\bar{q}_n(v,w)}
$$
and
$$
p=\frac{\bar{q}_n(u,v)}{\bar{q}_n(u,v)+\bar{q}_n(w,v)},\;\;q=\frac{\bar{q}_n(v,w)}{\bar{q}_n(u,v)+\bar{q}_n(w,v)}.
$$
Therefore
\begin{align*}
& \frac{1}{\bar{q}_n(u,v)+\bar{q}_n(v,w)}\mathbf{E}\left(|X(u)-X(w)|-\bar{q}_n(u,v)-\bar{q}_n(w,v)\right)_{+} \\
&\leqslant p\mathbf{E}\left(\frac{|X(u)-X(v)|}{\bar{q}_n(u,v)}-1\right)_{+}+q\mathbf{E}\left(\frac{|X(v)-X(w)|}{\bar{q}_n(v,w)}-1\right)_{+} \\
&\leqslant pN_n^{-1}+qN_{n}^{-1}=N_n^{-1}
\end{align*}
which by definition gives that
$$
\bar{q}_n(u,v)+\bar{q}_n(v,w)\geqslant \bar{q}_n(v,w).
$$
\end{proof}
Obviously usually we do not need to use the optimal distances $\bar{q}_n$
and replace the construction which is sufficient for our purposes. We say that a family of distances $q_n$, $n\geqslant 0$ on $T$ is \textit{admissible} if  
$q_n(s,t)\geqslant \bar{q}_n(s,t)$ and $q_{n+1}(s,t)\geqslant q_n(s,t)$.
For example Latala \cite{Lat2,Lat1} and Mendelson-Paouris \cite{Me-P} used moments, namely if
$\|X(t)-X(s)\|_p=(\mathbf{E}|X(t)-X(s)|^p)^{\frac{1}{p}}<\infty$, $p\geqslant 1$ then we may take $q_n(s,t)=2\|X(t)-X(s)\|_{2^n}$.
Indeed observe that
\begin{align*}
&\mathbf{E} \left(\frac{|X(t)-X(s)|}{2\|X(t)-X(s)\|_{2^n}}-1\right)_{+}\leqslant \frac{\mathbf{E} |X(t)-X(s)|^{2^n}}{\left(2\|X(t)-X(s)\|_{2^n}\right)^{2^n}}\leqslant \frac{1}{N_n}.
\end{align*}
Following Talagrand we say that a sequence of partitions $\mathcal{A}_n$, $n\geqslant 0$ of a set $T$ is \textit{admissible} if it is increasing, $\mathcal{A}_0=\{T\}$ and $|\mathcal{A}_n|\leqslant N_n$. 
Let us also define admissible sequences of partitions $\mathcal{A}=(\mathcal{A}_n)_{n\geqslant 0}$ of the set $T$, which means nested sequences of partitions such that $\mathcal{A}_0=\{T\}$
and $|\mathcal{A}_n|\leqslant N_n$. For each $A\in \mathcal{A}_n$ we define
$$
\Delta_n(A)=\sup_{s,t\in A}q_n(s,t).
$$
By $A_n(t)$ we call devote the element of $\mathcal{A}_n$ that contains point $t$. For each $A\in \mathcal{A}_n$, $n\geqslant 0$ we define $t_A$ as an arbitrary point in $A$.
We may and will assume that if $t_A\in  B\in \mathcal{A}_{n+1}$, $B\subset A\in \mathcal{A}_n$ then $t_B=t_A$.
Let $\pi_n(t)=t_{A_n(t)}$, then $\pi_0(t)=t_{T}$ is a fixed point in $T$. Let $T_n=\{\pi_n(t):\;t\in T\}$ for $n\geqslant 0$. Clearly $T_n$, $n\geqslant 0$ are nested, namely $T_n\subset T_{n+1}$ for $n\geqslant 0$. For each stochastic process $X(t)$, $t\in T$ and $\tau\geqslant 0$ we may define 
$$
\gamma^{\tau}_X(T)=\inf_{\mathcal{A}}\sup_{t\in T}\sum^{\infty}_{n=0}\Delta_{n+\tau}(A_n(t)).
$$ 
We prove that for $\tau\geqslant 2$, $\gamma^{\tau}_X(T)$ is a good upper bound for $\mathbf{E} \sup_{s,t\in T}|X(t)-X(s)|$.
\begin{thm}\label{tw:3}
For $\tau\geqslant 2$ the following inequality holds
$$
\mathbf{E} \sup_{s,t\in T}|X(t)-X(s)|\leqslant 4\gamma^{\tau}_X(T).
$$
\end{thm}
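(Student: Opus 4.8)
The plan is to run Talagrand's generic chaining, but with the truncated increments $(\,\cdot\,-q)_{+}$ in place of the usual sub-Gaussian tail control, exploiting the defining property of the admissible distances $q_n$. First I would fix the base point $t_T=\pi_0(t)$ and reduce the two-sided supremum to a one-sided one: since $X(t)-X(s)=(X(t)-X(t_T))-(X(s)-X(t_T))$, we have $\sup_{s,t\in T}|X(t)-X(s)|\leqslant 2\sup_{t\in T}|X(t)-X(t_T)|$, so it suffices to bound $\mathbf{E}\sup_{t}|X(t)-X(t_T)|$ by $2\gamma^{\tau}_X(T)$. I would choose an admissible sequence of partitions $\mathcal{A}$ that is $\epsilon$-optimal for $\gamma^{\tau}_X(T)$, i.e. $\sup_t\sum_n\Delta_{n+\tau}(A_n(t))\leqslant\gamma^{\tau}_X(T)+\epsilon$, and use the telescoping identity $X(t)-X(t_T)=\sum_{n\geqslant 0}\bigl(X(\pi_{n+1}(t))-X(\pi_n(t))\bigr)$ along the chain of representatives $\pi_n(t)$.

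The decisive step is to split every chain increment at a level-dependent threshold. Writing $u=\pi_n(t)$ and $v=\pi_{n+1}(t)$, both points lie in $A_n(t)$, so $q_{n+\tau}(u,v)\leqslant \Delta_{n+\tau}(A_n(t))$, and I would use
$$
|X(v)-X(u)|\leqslant q_{n+\tau}(u,v)+\bigl(|X(v)-X(u)|-q_{n+\tau}(u,v)\bigr)_{+}.
$$
Summing over $n$, the first (deterministic) terms give $\sum_{n}q_{n+\tau}(\pi_n(t),\pi_{n+1}(t))\leqslant \sum_n\Delta_{n+\tau}(A_n(t))\leqslant \gamma^{\tau}_X(T)+\epsilon$ by the choice of $\mathcal{A}$. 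It then remains to control the expected supremum of the excess part $R(t)=\sum_{n}(|X(\pi_{n+1}(t))-X(\pi_n(t))|-q_{n+\tau}(\pi_n(t),\pi_{n+1}(t)))_{+}$.

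Here I would pass from the supremum over $t$ to a sum over chain links. Each increment at level $n$ depends only on the pair $(\pi_n(t),\pi_{n+1}(t))$, which is determined by $A_{n+1}(t)$, so there are at most $|\mathcal{A}_{n+1}|\leqslant N_{n+1}$ distinct such pairs; since all summands are nonnegative this yields $\mathbf{E}\sup_t R(t)\leqslant \sum_n\sum_{\mathrm{links}\,(u,v)}\mathbf{E}(|X(v)-X(u)|-q_{n+\tau}(u,v))_{+}$. Now the admissibility $q_{n+\tau}(u,v)\geqslant\bar{q}_{n+\tau}(u,v)$ together with the monotonicity of $q\mapsto F_{q,n}$ established in the Lemma gives the crucial pointwise estimate $\mathbf{E}(|X(v)-X(u)|-q_{n+\tau}(u,v))_{+}\leqslant q_{n+\tau}(u,v)N_{n+\tau}^{-1}\leqslant \Delta_{n+\tau}(A_n(t))N_{n+\tau}^{-1}$. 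Bounding $\Delta_{n+\tau}(A)$ for every $A\in\mathcal{A}_n$ by $\gamma^{\tau}_X(T)+\epsilon$ (pick any $t\in A$ and discard the other nonnegative terms of the chaining sum) then gives $\mathbf{E}\sup_t R(t)\leqslant (\gamma^{\tau}_X(T)+\epsilon)\sum_n N_{n+1}N_{n+\tau}^{-1}$.

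The main obstacle is precisely this last series, and it is exactly where the hypothesis $\tau\geqslant 2$ enters: with $N_n=2^{2^n}$ one has $N_{n+1}N_{n+\tau}^{-1}=2^{2^{n+1}-2^{n+\tau}}$, which is summable with sum $<1$ once $\tau\geqslant 2$, whereas for $\tau=1$ each term equals $1$ and the series diverges. Collecting the estimates gives $\mathbf{E}\sup_t|X(t)-X(t_T)|\leqslant \bigl(1+\sum_n N_{n+1}N_{n+\tau}^{-1}\bigr)(\gamma^{\tau}_X(T)+\epsilon)$, and after the factor-$2$ reduction and letting $\epsilon\to 0$ the resulting absolute constant comes out strictly below $4$. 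The only technical point that remains is the validity of the telescoping limit $X(\pi_n(t))\to X(t)$; I would dispose of it by first proving the inequality for finite $T$, where the partitions eventually separate points, and then passing to the countable case by a standard approximation along an exhaustion $T_k\uparrow T$.
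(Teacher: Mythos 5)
Your proof is correct and follows essentially the same route as the paper: chain along $\pi_n(t)$, split each increment at the threshold $q_{n+\tau}(\pi_n(t),\pi_{n+1}(t))\leqslant\Delta_{n+\tau}(A_n(t))$, and control the expected excess via the defining property $\mathbf{E}(|X(v)-X(u)|-q)_{+}\leqslant qN_{n+\tau}^{-1}$ for $q\geqslant\bar q_{n+\tau}(u,v)$. The only (harmless) deviation is that you count at most $N_{n+1}$ distinct links per level via $A_{n+1}(t)$, yielding the series $\sum_n N_{n+1}N_{n+\tau}^{-1}$, whereas the paper uses the cruder double sum over $u\in T_n$, $v\in A_n(u)\cap T_{n+1}$ giving $\sum_n N_nN_{n+1}N_{n+\tau}^{-1}$; both converge to less than $1$ for $\tau\geqslant 2$ and produce the constant $4$.
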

\begin{proof}
Note that
\begin{align*}
&|X(t)-X(\pi_0(t))|\\
&\leqslant \sum^{\infty}_{n=0} q_{n+\tau}(\pi_{n+1}(t),\pi_{n}(t))\\
&+\sum^{\infty}_{n=0}\left(|X(\pi_{n+1}(t))-X(\pi_n(t))|-q_{n+\tau}(\pi_{n+1}(t),\pi_n(t))\right)_{+} \\
&\leqslant \sum^{\infty}_{n=0}\Delta_{n+\tau}(A_n(t))+\sum^{\infty}_{n=0}\sum_{u\in T_{n}}\sum_{v\in A_{n}(u)\cap T_{n+1}}\left(|X(u)-X(v)|-q_{n+\tau}(u,v)\right)_{+}.
\end{align*}
For any $\varepsilon>0$ one can find nearly optimal admissible partition 
$(\mathcal{A}_n)_{n\geqslant 0}$ such that 
$$
\sup_{t\in T}\sum^{\infty}_{n=0}\Delta_{n+\tau}(A_n(t))\leqslant (1+\varepsilon)\gamma^{\tau}_X(T)
$$
and therefore
\begin{align*}
&\mathbf{E} \sup_{t\in T}|X(t)-X(\pi_0(t))|\\
&\leqslant \gamma^{\tau}_X(T)+\varepsilon+\sum^{\infty}_{n=0}\sum_{u\in T_n}\sum_{v\in A_{n}(u)\cap T_{n+1}}\frac{q_{n+\tau}(u,v)}{N_{n+\tau}} \\
&\leqslant (1+\varepsilon)\gamma^{\tau}_X(T)+\sum^{\infty}_{n=0}\sum_{u\in T_n}\Delta_{n+\tau}(A_n(u))\frac{N_{n+1}}{N_{n+\tau}} \\
&\leqslant  \gamma^{\tau}_X(T)\left(1+\varepsilon+\sum^{\infty}_{n=0}\frac{N_n N_{n+1}}{N_{n+\tau}}\right) \\
&\leqslant  (1+\varepsilon)\gamma^{\tau}_X(T)+
\gamma^{\tau}_X(T)\left(\frac{N_0N_1}{N_2}+\sum^{\infty}_{n=1}\frac{1}{N_n}\right)\\
&\leqslant \gamma^{\tau}_X(T)\left(1+\varepsilon+\sum^{\infty}_{n=0}2^{-n-1}\right),
\end{align*}
where in the last line we used $N_n^2=N_{n+1}$,
$N_n\geqslant {2^{n+1}}$ for $n\geqslant 1$ and $N_0N_1/ N_2=1/4< 1/2$.
Since $\varepsilon$ is arbitrary small we infer that
$\mathbf{E}\sup_{t\in T}|X(t)-X(\pi_0(t))|\leqslant 2\gamma^{\tau}_X(T)$ and hence
$\mathbf{E} \sup_{s,t}|X(t)-X(s)|\leqslant 4\gamma_X^{\tau}(T)$.
\end{proof}
The basic question is how to construct  admissible sequences of partitions. The simplest way to do this goes through entropy numbers. Recall that for a given distance $\rho$ on $T$
the quantity $N(T,\rho,\varepsilon)$ denotes the smallest number of balls of radius $\varepsilon$ with respect to $\rho$ necessary to cover the set $T$.
Consequently for a given $\tau\geqslant 0$ we define entropy numbers as
$$
e^{\tau}_n=\inf\{\varepsilon>0:\;N(T,q_{n+\tau},\varepsilon)\leqslant N_{n}\},\;\;n\geqslant 0.
$$
Having the construction ready we may easily produce an admissible sequence of partitions. On each level $n$ there exists 
at most $N_{n}$ sets of $q_{n+\tau}$ diameter $2\varepsilon$ that covers $T$. To obtain nested sequence of partitions we have to intersect all the sets constructed
at levels $0,1,\ldots,n-1$. The partition $\mathcal{A}_n$ has no more than $N_0N_1\ldots N_{n-1}\leqslant N_n$ elements. Moreover 
for each set $A\in \mathcal{A}_n$ we have 
$$
\Delta_{n+\tau-1}(A)\leqslant 2e^{\tau}_{n-1}\;\;\mbox{for}\;n\geqslant 1.
$$ 
Obviously $\Delta_{\tau-1}(T)\leqslant \Delta_{\tau}(T)\leqslant 2e^{\tau}_0$.
Let $\mathcal{E}^{\tau}_X(T)=\sum^{\infty}_{n=0}e^{\tau}_n$, then for any $\tau\geqslant 1$ 
$$
\gamma^{\tau-1}_{X}(T)\leqslant 2e^{\tau}_0+2\sum^{\infty}_{n=1}e^{\tau}_{n-1}\leqslant 4\mathcal{E}^{\tau}_X(T).
$$
and hence by Theorem \ref{tw:3} with $\tau\geqslant 3$
\begin{equation}\label{nier2}
\mathbf{E}\sup_{s,t\in T}|X(t)-X(s)|\leqslant 16\mathcal{E}^{\tau}_X(T).
\end{equation}
We turn to our main question of processes on product spaces. 
Consider $X(u,t)$, $u\in U$, $t\in T$ we define two different families of distances
$q_{n,t}$ and $q_{n,u}$ admissible to control marginal processes respectively $u\rightarrow X(u,t)$ on $U$ and 
$t\rightarrow X(u,t)$ on $T$.
\smallskip

\noindent
First for a given $t\in T$, let
$q_{n,t}$, $n\geqslant 0$ be a family of distances on $U$  admissible for the process $u\rightarrow X(u,t)$ and let $e^{\tau}_{n,t}$, $n\geqslant 0$ be
entropy numbers on $U$ constructed for $q_{n,t}$, $n\geqslant 0$. By (\ref{nier2}) we infer that for $\tau\geqslant 3$  
\begin{equation}\label{nier3}
\mathbf{E}\sup_{u,v\in U}|X(u,t)-X(v,t)|\leqslant 16\sum^{\infty}_{n=0}e^{\tau}_{n,t}.
\end{equation}
If we define $\mathcal{E}^{\tau}_{X,t}(U)=\sum^{\infty}_{n=0}e^{\tau}_{n,t}$ and $\mathcal{E}^{\tau}_{X,T}(U)=\sup_{t\in T}\mathcal{E}^{\tau}_{X,t}(U)$ then we may rewrite (\ref{nier3}) as
$$
\sup_{t\in T}\mathbf{E} \sup_{u,v\in U}|X(u,t)-X(v,t)|\leqslant 16\mathcal{E}^{\tau}_{X,T}(U),
$$
and in this way the entropy numbers may be used to bound the family of processes $u\rightarrow X(u,t)$ where $t\in T$.
\smallskip

\noindent
On the other hand for a given $u\in U$ let $q_{n,u}$, $n\geqslant 0$ be a family of distances on $T$ 
admissible for $t\rightarrow X(u,t)$. Obviously $q_{n,U}=\sup_{u\in U}q_{n,u}$ is a good upper bound for all distances $q_{n,u}$.
Let 
$$
\gamma^{\tau}_{X,U}(T)=\inf_{\mathcal{A}}\sup_{t\in T}\sum^{\infty}_{n=0}\Delta_{n+\tau,U}(A_n(t))
$$
where the infimum is taken over all admissible partitions $\mathcal{A}=(\mathcal{A}_n)_{n\geqslant 0}$ of $T$ and 
$$
\Delta_{n,U}(A)=\sup_{s,t\in A}q_{n,U}(s,t)=\sup_{s,t\in A}\sup_{u\in U}q_{n,u}(s,t).
$$  
Theorem \ref{tw:3} applied to distances $q_{n,U}$, $n\geqslant 0$ implies that
$$
\sup_{u\in U}\mathbf{E}\sup_{s,t\in T}|X(u,t)-X(u,s)|\leqslant 4\gamma^{\tau}_{X,U}(T).
$$
We prove that the described two quantities i.e. $\mathcal{E}^{\tau}_{X,T}(U)$ and $\gamma^{\tau}_{X,U}(T)$ suffice to control the process $X(u,t)$, $u\in U$, $t\in T$.
\smallskip

\noindent
We state our main result which extends the idea described in Theorem 3.3.1 in \cite{Tal1}.
\begin{thm}\label{tw:4}
For any $\tau\geqslant 4$ the following inequality holds 
$$
\mathbf{E} \sup_{u,v\in U}\sup_{s,t\in T} |X(u,t)-X(v,s)|\leqslant 24(\gamma^{\tau}_{X,U}(T)+\mathcal{E}^{\tau}_{X,T}(U)).
$$
\end{thm}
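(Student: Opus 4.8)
The plan is to reduce to a base point and then to peel the two functionals off one at a time. Fix $u_0\in U$ and recall $\pi_0(t)=t_0$ for all $t$; by the triangle inequality $\mathbf{E}\sup_{u,v}\sup_{s,t}|X(u,t)-X(v,s)|\leqslant 2\mathbf{E}\sup_{u,t}|X(u,t)-X(u_0,t_0)|$, which is where the final factor $2$ (and hence the passage to the constant $24$) comes from. I split $X(u,t)-X(u_0,t_0)=[X(u_0,t)-X(u_0,t_0)]+[X(u,t)-X(u_0,t)]$. The first bracket is a pure $t$-process at the fixed point $u_0$, so Theorem \ref{tw:3} applied to $t\mapsto X(u_0,t)$ with the distances $q_{n,U}\geqslant q_{n,u_0}$ bounds $\mathbf{E}\sup_t|X(u_0,t)-X(u_0,t_0)|$ by $2\gamma^{\tau}_{X,U}(T)$. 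The whole difficulty is therefore concentrated in the second bracket, the $u$-fluctuation made \emph{uniform in} $t$, namely in bounding $\mathbf{E}\sup_{u,t}|X(u,t)-X(u_0,t)|$.

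To control this term I would chain in $t$ along a nearly optimal admissible sequence of partitions $(\mathcal{A}_n)$ realising $\gamma^{\tau}_{X,U}(T)$, with maps $\pi_n$ and sets $T_n$, $|T_n|\leqslant N_n$. Writing $D(u,t)=X(u,t)-X(u_0,t)$ and telescoping, $D(u,t)-D(u,t_0)=\sum_{n\geqslant0}\big(D(u,\pi_{n+1}(t))-D(u,\pi_n(t))\big)$, the remainder $D(u,t_0)=X(u,t_0)-X(u_0,t_0)$ is a pure $u$-fluctuation at the fixed point $t_0$ and is bounded, by (\ref{nier3}) at $t_0$, by $16\,\mathcal{E}^{\tau}_{X,T}(U)$. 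Each increment $D(u,\pi_{n+1}(t))-D(u,\pi_n(t))$ is a difference of the $t$-increment $X(u,\pi_{n+1}(t))-X(u,\pi_n(t))$ and the same increment at $u_0$; since both have tails governed by $q_{n+\tau,U}(\pi_{n+1}(t),\pi_n(t))\leqslant\Delta_{n+\tau,U}(A_n(t))$, I cut each increment at a fixed multiple of this quantity. The parts below the cut sum, along a single chain, to at most a constant times $\sup_t\sum_n\Delta_{n+\tau,U}(A_n(t))\leqslant(1+\varepsilon)\gamma^{\tau}_{X,U}(T)$.

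It remains to bound the expected supremum over $u$ and $t$ of the excess parts $\big(\,\cdot\,-\text{cut}\big)_{+}$. Here I would superimpose, on top of the $t$-chain, a chaining in $u$: at level $n$ I approximate $u$ by an entropy net point of $U$ associated with the nodes $\pi_{n+1}(t),\pi_n(t)$, using the numbers $e^{\tau}_{n,\pi_{n+1}(t)}$ and $e^{\tau}_{n,\pi_n(t)}$, each net having at most $N_n$ points; the tails of the $u$-increments of $X(u,\pi_{n+1}(t))-X(u,\pi_n(t))$ are then governed by the $U$-distances $q_{n+\tau,\pi_{n+1}(t)}$ and $q_{n+\tau,\pi_n(t)}$. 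The fully discretised increments, being indexed by at most $N_{n+1}$ distinct $t$-pairs and at most $N_n$ net points, are handled by a single union bound over roughly $N_{n+1}^2$ events, each contributing an expected excess $\leqslant(\text{cut})\,N_{n+\tau}^{-1}$; the resulting series $\sum_n N_{n+1}^2N_{n+\tau}^{-1}=\sum_n N_n^{\,4-2^{\tau}}$ converges precisely because $\tau\geqslant4$.

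The step I expect to be the real obstacle is charging the $u$-discretisation errors to $\mathcal{E}^{\tau}_{X,T}(U)$. Because the entropy distances $q_{n,w}$ on $U$ depend on the $t$-node $w$, the $u$-approximations at successive levels sit at \emph{different} nodes $\pi_n(t)$, so the naive telescoping bound $\sum_n e^{\tau}_{n,\pi_n(t)}\leqslant\mathcal{E}^{\tau}_{X,T}(U)$ is false: one only has $\mathcal{E}^{\tau}_{X,T}(U)=\sup_w\sum_n e^{\tau}_{n,w}$, not $\sum_n\sup_w e^{\tau}_{n,w}$. The point is thus to organise the $u$-errors so that the multiplicity of nodes at each level is absorbed by the decay coming from the $t$-chain, invoking (\ref{nier3}) level by level rather than naively summing; reconciling this accounting with the two-dimensional union bound of the previous paragraph is exactly what forces the shift $\tau\geqslant4$, one unit beyond the $\tau\geqslant3$ needed for entropy alone in (\ref{nier2}). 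Once this is done, collecting the $2\gamma^{\tau}_{X,U}(T)$ from the first bracket, the $16\,\mathcal{E}^{\tau}_{X,T}(U)$ from the base $t_0$, the $\gamma$- and $\mathcal{E}$-contributions of the cut and excess, and the overall factor $2$, gives the stated constant $24$.
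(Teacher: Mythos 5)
Your proposal correctly isolates the central difficulty --- that the $U$-entropy numbers $e^{\tau}_{n,w}$ depend on the $t$-node $w$, so that along a $t$-chain the sum $\sum_n e^{\tau}_{n,\pi_n(t)}$ is not controlled by $\mathcal{E}^{\tau}_{X,T}(U)=\sup_w\sum_n e^{\tau}_{n,w}$ --- but you then leave exactly this step unresolved, describing only in outline how the multiplicity of nodes ``should'' be absorbed. That is a genuine gap: without a concrete device the argument does not close. The paper's resolution is a one-line device you could retrofit: for each cell $C$ of the nearly optimal admissible partition of $T$ at level $n$, choose the anchor $\pi_n(C)$ to be a point of $C$ that nearly \emph{minimizes} $e^{\tau}_{n,\cdot}$ over $C$. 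Then for the very $t$ whose chain you follow, $e^{\tau}_{n,\pi_n(C_n(t))}\leqslant(1+\varepsilon)e^{\tau}_{n,t}$, and summing in $n$ gives $(1+\varepsilon)\sum_n e^{\tau}_{n,t}\leqslant(1+\varepsilon)\mathcal{E}^{\tau}_{X,T}(U)$, which is exactly the single-node bound you need.

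The architecture also differs from the paper's in a way that affects your accounting. The paper does not run a two-stage chaining with cuts, excess terms and a separate union bound; it builds a single admissible partition of the product $U\times T$ (cells $B\times C$, where $C$ is a cell of the $T$-partition at level $n-2$ and $B$ is an entropy cell of $U$ for the distance anchored at $\pi_{n-2}(C)$, intersected to make the sequence nested, with two trivial initial levels), controls the product diameters via the triangle inequality $\bar{q}_{n}((v,x),(w,y))\leqslant q_{n,U}(x,z)+q_{n,U}(y,z)+q_{n,z}(v,w)$, and applies Theorem \ref{tw:3} once. The two-level index shift in that construction is what forces $\tau\geqslant 4$ (so that $\tau-2\geqslant 2$ in Theorem \ref{tw:3}); your series $\sum_n N_{n+1}^2N_{n+\tau}^{-1}$ already converges for $\tau\geqslant 3$, so it does not explain the constraint. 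The constant $24$ likewise arises as $4\times 6$ from that single application of Theorem \ref{tw:3}, not from the initial factor $2$ in your reduction to a base point.
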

\begin{proof}
We first observe that if $\gamma^{\tau}_{X,U}(T)<\infty$ then, by
the definition, for any $\varepsilon>0$ there exists an admissible partition sequence $\mathcal{C}=(\mathcal{C}_n)_{n\geqslant 0}$ of $T$, which satisfies
$$
(1+\varepsilon)\gamma^{\tau}_{X,U}(T)\geqslant \sup_{t\in T}\sum^{\infty}_{n=0}\Delta_{n+\tau,U}(B_n(t)).
$$
Let us fix $C\in\mathcal{C}_n$ and let $\pi_n(C)$ be a point in $T$ such that 
$$
e^{\tau}_{n,\pi_n(C)}\leqslant (1+\varepsilon)\inf\{e^{\tau}_{n,t}:\;t\in B\}.
$$
Consequently, for any $n\geqslant 2$ there exists a partition $\mathcal{B}_{C,n-2}$ of the set $U$ into at most $N_{n-2}$ sets $B$ that satisfy
$$
\Delta_{n+\tau-2,\pi_{n-2}(C)}(B)\leqslant 2e^{\tau}_{n-2,\pi_{n-2}(C)}, \;\;\mbox{where}\; \Delta_{n,t}(B)=\sup_{u,v\in B}q_{n,t}(u,v).
$$
Using sets $B\times C$ for $B\in \mathcal{B}_{C,n-2}$ and $C\in \mathcal{C}_{n-2}$
we get a partition $\mathcal{A}_{n-2}'$ of $U\times T$ into at most $N_{n-2}^2\leqslant N_{n-1}$ sets. Finally
intersecting all the constructed sets in $\mathcal{A}_0',\mathcal{A}_1',\ldots,\mathcal{A}_{n-2}'$ we obtain a nested sequence of partitions $(\mathcal{A}_n)_{n\geqslant 2}$ such that $|\mathcal{A}_n|\leqslant N_{n}$.
We complete the sequence by $\mathcal{C}_0=\mathcal{C}_1=\{U\times T\}$.
In this way $\mathcal{A}=(\mathcal{A}_{n})_{n\geqslant 0}$ is an admissible sequence of partitions for $U\times T$. 
Let $A_n(u,t)$ be the element of $\mathcal{A}_n$ that contains point $(u,t)$. Clearly
$$
A_n(u,t)\subset B\times C,  
$$
where $C=C_{n-2}(t)$ and $u\in B\in \mathcal{B}_{C,n-2}$. Therefore for $n\geqslant 2$,
$$
\sup_{s,s'\in C}q_{n+\tau-2,U}(s,s')\leqslant \Delta_{n+\tau-2,U}(C)
$$
and 
$$
\sup_{v,v'\in B}q_{n+\tau-2,\pi_n(C)}(v,v')\leqslant 2e^{\tau}_{n-2,\pi_{n-2}(C)}\leqslant 2(1+\varepsilon)e^{\tau}_{n-2,t}.
$$
We turn to the analysis of optimal quantiles $\bar{q}_n$ for the process $X(u,t)$, $u\in\mathcal{A}$, $t\in T$. We show that for any $x,y,z\in T$ and $v,w\in U$
$$
\bar{q}_{n}((v,x),(w,y))\leqslant q_{n,U}(x,z)+q_{n,U}(y,z)+q_{n,z}(v,w).
$$
This holds due to the triangle inequality
\begin{align*}
&\bar{q}_{n}((v,x),(w,y))\\
&\leqslant  \bar{q}_{n}((v,x),(v,z))+\bar{q}_{n}((w,y),(w,z))+
\bar{q}_{n}((v,z),(w,z))\\
&\leqslant  q_{n,v}(x,z)+q_{n,w}(y,z)+q_{n,z}(v,w)\\
&\leqslant  q_{n,U}(x,z)+q_{n,U}(y,z)+q_{n,z}(v,w).
\end{align*}
In particular it implies that for any $(v,s)\in B\times C$
\begin{align*}
&\bar{q}_{n+\tau-2}((u,t),(v,s))\\
&\leqslant q_{n+\tau-2,U}(t,\pi_{n-2}(C))+q_{n+\tau-2,U}(s,\pi_{n-2}(C))+q_{n+\tau-2,\pi_{n-2}(C)}(u,v)
\end{align*}
and hence
$$
\Delta_{n+\tau-2}(B\times C)\leqslant 2\Delta_{n+\tau-2,U}(C_{n-2}(t))+2(1+\varepsilon)e^{\tau}_{n-2,t}.
$$
If $\tau\geqslant 2$ then also for $n=0,1$,
$$
\Delta_{n+\tau-2}(U\times T)\leqslant \Delta_{\tau}(U\times T)\leqslant 2\Delta_{\tau}(T)+2(1+\varepsilon)\sup_{t\in T}e^{\tau}_{0,t}.
$$ 
It implies that for any $\tau\geqslant 2$
$$
\gamma^{\tau-2}_{X}(U\times T)\leqslant 4\Delta_{\tau}(T)+4(1+\varepsilon)\sup_{t\in T}e^{\tau}_{0,t}+2(1+\varepsilon)\gamma^{\tau}_{X,U}(T)
+2(1+\varepsilon)\mathcal{E}^{\tau}_{X,T}(U).
$$
Therefore for any $\tau\geqslant 4$ we may apply Theorem \ref{tw:2} for distances $\bar{q}_n$  and in this way prove our result with the constant $24$.
\end{proof}
In the next sections we analyse various applications of the result.

\section{Gaussian case}

As we have mentioned the basic example for the theory are Gaussian canonical processes.
Suppose that $T\subset \ell^2(I)$, we recall that
$$
G(t)=\sum_{i\in I}t_ig_i,\;\;t\in T,
$$
where $(g_i)_{i\in I}$ is a sequence of i.i.d. standard normal variables.
For the process $G(t)$, $t\in T$ the natural distance is 
$$
d(s,t)=(\mathbf{E}|G(t)-G(s)|^2)^{\frac{1}{2}}=\|t-s\|_2,\;\;s,t\in T.
$$
It is easy to see that the optimal quantiles for $G$ satisfy
$$
\bar{q}_n(s,t)\sim 2^{\frac{n}{2}}d(s,t),\;\;\mbox{for all}\;s,t\in T.
$$
Consequently denoting $q_n(s,t)=C2^{\frac{n}{2}}d(s,t)$ for large enough $C$ we get an admissible
sequence of distances. Moreover,
\begin{equation}\label{nude}
\gamma_G(T)\sim \gamma_2(T,d)=\inf_{\mathcal{A}}\sup_{t\in T}\sum^{\infty}_{n=0}2^{\frac{n}{2}}\Delta(A_n(t)),
\end{equation}
where the infimum is taken over all admissible $\mathcal{A}=(\mathcal{A}_n)_{n\geqslant 0}$ sequences of partitions and $\Delta(A)=\sup_{s,t\in A}d(s,t)$.
Since $d$ is the canonical distance for $\ell^2(I)$ we usually suppress $d$ from $\gamma_2(T,d)$ and simply write $\gamma_2(T)$.
As we have mentioned in the introduction, using (\ref{lasek}) we get
$$
K^{-1}\gamma_2(T)\leqslant \mathbf{E} \sup_{s,t\in T}|G(t)-G(s)|\leqslant K\gamma_2(T).
$$
Let us also define
$$
e_n=\inf\{\varepsilon:\;N(T,d,\varepsilon)\leqslant N_n\},\;\;n\geqslant 0.
$$
Obviously $e^{\tau}_n\leqslant C2^{\frac{n+\tau}{2}}e_n$ and hence
$$
\mathcal{E}^{\tau}_{G}(T)\leqslant C2^{\frac{\tau}{2}}\sum_{n\geqslant 0}2^{\frac{n}{2}}e_n.
$$
Let $\mathcal{E}(T,d)=\sum^{\infty}_{n= 0}2^{\frac{n}{2}}e_n$, it is the well known fact 
(Theorem 3.1.1 in \cite{Tal1}) that
\begin{equation}\label{min2}
K^{-1}\mathcal{E}(T,d)\leqslant \int^{\infty}_0 \sqrt{\log(N(T,d,\varepsilon))}d\varepsilon \leqslant K\mathcal{E}(T,d).
\end{equation}
Again since $d$ is the canonical distance on $\ell^2(I)$ we will suppress $d$ from $\mathcal{E}(T,d)$ and write $\mathcal{E}(T)$.
By (\ref{nier2}) we infer Dudley's bound
$$
\mathbf{E}\sup_{s,t\in T}|G(t)-G(s)|\leqslant 16C2^{\frac{\tau}{2}}\mathcal{E}(T).
$$
Turning our attention to product spaces let us 
recall that for $U\subset \mathbb{R}^{I}$ or $\mathbb{C}^{I}$, $T\subset \mathbb{R}^{I}$ or $C^I$ such that
$ut=(u_it_i)_{i\in I}\in \ell^2(I)$ for all $u\in U$ and $t\in T$ we may define
$$
G(u,t)=\sum_{i\in I}u_i t_i g_i,\;\;u\in U,t\in T, 
$$
where $(g_i)_{i\in I}$ is a Gaussian sequence.
Note that for all $s,t\in T$, $u,v\in U$
$$
\bar{q}_n((u,t),(v,s))\leqslant C2^{\frac{n}{2}}\|ut-vs\|_2.
$$
For each $u\in U$ and $s,t\in T$ let $d_u(s,t)=\|u(t-s)\|_2$.
We may define 
$$
q_{n,u}(s,t)=C2^{\frac{n}{2}}d_u(s,t),\;\;
q_{n,U}(s,t)=C2^{\frac{n}{2}}\sup_{u\in U}d_u(s,t).
$$
In particular
$$
q_{n,U}(s,t)\leqslant C\sup_{u\in U}\|u\|_{\infty}d(s,t)
$$
and therefore
\begin{equation}\label{min3}
\gamma^{\tau}_{G,U}(T)\leqslant 
C2^{\frac{\tau}{2}}\sup_{u\in U}\|u\|_{\infty}\gamma_2(T,d).
\end{equation}
On the other hand we define for all $t\in T$ and $u,v\in U$ 
\begin{equation}\label{nerf}
q_{n,t}(u,v)=C2^{\frac{n}{2}}\|t(u-v)\|_2.
\end{equation}
For each $t\in T$ let us denote by $d_t$ the distance on $U$ given by 
$$
d_t(u,v)=\left(\sum_{i\in I} |t_i|^2|u_i-v_i|^2\right)^{\frac{1}{2}},\;\;u,v\in U.
$$ 
Using these distances we may rewrite (\ref{nerf}) as $q_{n,t}= C2^{\frac{n}{2}}d_t$.
Let
$$
e_{n,t}=\inf\{\varepsilon:\;N(U,d_t,\varepsilon)\leqslant N_n\},\;\;n\geqslant 0\;\;\mbox{and}\;\;\mathcal{E}(U,d_t)=\sum^{\infty}_{n=0}2^{\frac{n}{2}}e_{n,t},
$$
we obtain by (\ref{nerf})
$$
e^{\tau}_{n,t}\leqslant C2^{\frac{n+\tau}{2}}e_{n,t}\;\;\mbox{and}\;\;
\mathcal{E}^{\tau}_{X,T}(U)\leqslant C2^{\frac{\tau}{2}}\sup_{t\in T}\mathcal{E}(U,d_t).
$$
Using (\ref{min2}) we have 
$$
K^{-1}\mathcal{E}(U,d_t)\leqslant \int^{\infty}_0 \sqrt{\log(N(U,d_t,\varepsilon))}d\varepsilon \leqslant K\mathcal{E}(U,d_t).
$$
We recall also that if $0\in T$ then by (\ref{lasek}) $\gamma_2(T)\sim g(T)=\mathbf{E}\sup_{t\in T}|G(t)|$.
We may state the following corollary of Theorem \ref{tw:4} which extends slightly Theorem 3.3.1 in \cite{Tal1}.
\begin{cor}\label{cor:1}
For any $\tau\geqslant 4$
\begin{align*}
&\mathbf{E} \sup_{u,v\in U}\sup_{s,t\in T} |G(u,t)-G(v,s)|\\
&\leqslant 32(\gamma^{\tau}_{G,U}(T)+\mathcal{E}^{\tau}_{G,T}(U)) \leqslant
32C2^{\frac{\tau}{2}} \left(\sup_{u\in U}\|u\|_{\infty}\gamma_2(T)+\sup_{t\in T} \mathcal{E}(U,d_t)\right).
\end{align*}
Moreover 
$$
\mathcal{E}(U,d_t)\sim \int^{\infty}_0 \sqrt{\log N(U,d_t,\varepsilon)}d\varepsilon\;\;\mbox{and}\;\;\gamma_2(T)\sim g(T)\;\;\mbox{if}\;0\in T.
$$
\end{cor}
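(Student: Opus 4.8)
The plan is to read the first inequality directly off Theorem \ref{tw:4} applied to the Gaussian canonical process $X=G$, and then to translate the two abstract quantities $\gamma^{\tau}_{G,U}(T)$ and $\mathcal{E}^{\tau}_{G,T}(U)$ into the classical Gaussian functionals by means of the estimates already assembled just above in this section. Everything needed has in fact been prepared, so the corollary is essentially a matter of bookkeeping.

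First I would fix the admissible data. The product process $G(u,t)=\sum_{i\in I}u_it_ig_i$ has optimal quantiles comparable to $2^{n/2}\|ut-vs\|_2$, and the marginal processes $s\mapsto G(u,s)$ and $v\mapsto G(v,t)$ carry the distances $d_u$ and $d_t$. Hence, for $C$ large enough, the families $q_{n,u}=C2^{n/2}d_u$ on $T$ and $q_{n,t}=C2^{n/2}d_t$ on $U$ dominate the respective optimal quantiles and are therefore genuinely admissible; the only point that truly needs checking is exactly this domination, which rests on the comparability $\bar{q}_n\sim 2^{n/2}d$ recorded for canonical Gaussian processes. With these choices the quantities $\gamma^{\tau}_{G,U}(T)$ and $\mathcal{E}^{\tau}_{G,T}(U)$ are precisely those entering Theorem \ref{tw:4}, so that theorem yields $\mathbf{E}\sup_{u,v\in U}\sup_{s,t\in T}|G(u,t)-G(v,s)|\leqslant 32(\gamma^{\tau}_{G,U}(T)+\mathcal{E}^{\tau}_{G,T}(U))$, the constant being inherited from Theorem \ref{tw:4} and relaxed slightly to absorb the comparison constants in $\bar{q}_n\sim 2^{n/2}\|\cdot\|_2$.

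For the second inequality I would simply substitute the two bounds established immediately before the corollary. The estimate (\ref{min3}) gives $\gamma^{\tau}_{G,U}(T)\leqslant C2^{\tau/2}\sup_{u\in U}\|u\|_{\infty}\gamma_2(T)$, while summing $e^{\tau}_{n,t}\leqslant C2^{(n+\tau)/2}e_{n,t}$ over $n$ gives $\mathcal{E}^{\tau}_{G,T}(U)\leqslant C2^{\tau/2}\sup_{t\in T}\mathcal{E}(U,d_t)$. Adding these and pulling out the common factor $32C2^{\tau/2}$ produces exactly the claimed right-hand side $32C2^{\tau/2}(\sup_{u\in U}\|u\|_{\infty}\gamma_2(T)+\sup_{t\in T}\mathcal{E}(U,d_t))$.

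Finally, the two comparabilities in the \emph{Moreover} clause are direct citations: $\mathcal{E}(U,d_t)\sim\int_0^{\infty}\sqrt{\log N(U,d_t,\varepsilon)}\,d\varepsilon$ is (\ref{min2}) applied to the metric space $(U,d_t)$, and $\gamma_2(T)\sim g(T)$ for $0\in T$ follows from (\ref{lasek}) together with the elementary remark that when the index set passes through the origin the quantities $\mathbf{E}\sup_{t\in T}G(t)$ and $\mathbf{E}\sup_{t\in T}|G(t)|$ are comparable. I do not expect a serious obstacle anywhere: the genuine content has all been front-loaded into the Gaussian section, and the single verification of real substance is the admissibility of the chosen distance families $q_{n,u}$ and $q_{n,t}$, which reduces to the quantile comparability already in hand.
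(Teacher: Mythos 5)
Your proposal matches the paper's route exactly: the paper states this corollary as an immediate consequence of Theorem \ref{tw:4} combined with the bound (\ref{min3}) for $\gamma^{\tau}_{G,U}(T)$, the estimate $\mathcal{E}^{\tau}_{G,T}(U)\leqslant C2^{\tau/2}\sup_{t\in T}\mathcal{E}(U,d_t)$ obtained from $e^{\tau}_{n,t}\leqslant C2^{(n+\tau)/2}e_{n,t}$, and the citations of (\ref{min2}) and (\ref{lasek}) for the final comparabilities, with the admissibility of $q_{n,u}=C2^{n/2}d_u$ and $q_{n,t}=C2^{n/2}d_t$ resting on $\bar{q}_n\sim 2^{n/2}d$ just as you say. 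Your bookkeeping, including the remark that the constant is relaxed from the $24$ of Theorem \ref{tw:4} to $32$ to absorb comparison constants, is consistent with the paper's presentation.
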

It is a tempting idea to replace $\sup_{u\in U}\|u\|_{\infty}\gamma_2(T)$ and $\sup_{t\in T}\mathcal{E}(U,d_t)$
by respectively $\sup_{u\in U}\gamma_2(T,d_u)$ and $\sup_{t\in T}\gamma_2(U,d_t)$.
We show that this approach cannot work, to this aim lets us consider toy example where $T$ and $U$ are
usual ellipsoids, i.e.
\begin{equation}\label{mordka}
U=\left\{u\in \mathbb{R}^I:\;\sum_{i\in I}\frac{|u_i|^2}{|x_i|^2}\leqslant 1\right\}.
\end{equation}
and
$$
T=\left\{t\in \mathbb{R}^I:\;\sum_{i\in I}\frac{|t_i|^2}{|y_i|^2}\leqslant 1\right\}.
$$
Obviously
$$
\mathbf{E}  \sup_{u\in U,t\in T}|G(u,t)|=\mathbf{E} \max_{i\in I}|x_iy_ig_i|
$$
on the other hand 
$$
\sup_{u\in U} \mathbf{E}\sup_{t\in T}|G(u,t)|\sim \sup_{u\in U}\|u y\|_2=\max_{i\in I}|x_iy_i|=\|xy\|_{\infty}.
$$
Similarly, $\sup_{t\in T}\mathbf{E} \sup_{u\in U}|G(u,t)|\sim \|xy\|_{\infty}$. However $\|xy\|_{\infty}\leqslant 1$
does not guarantee that $\mathbf{E} \max_{i\in I}|x_iy_ig_i|$ is finite, for example if $x_i=y_i=1$.
On the other hand Corollary \ref{cor:1} implies the following result for the ellipsoid $U$. 
\begin{rem}\label{rem:1}
Suppose that $\mathcal{A}$ is given by (\ref{mordka}). Then for any set $0\in T\subset \ell^2(I)$,
$$
\mathbf{E} \sup_{u\in U}\sup_{t\in T}|G(u,t)|\leqslant K(\|x\|_{\infty}g(T)+\Delta(T)\|x\|_2).
$$
\end{rem}
\begin{proof} For the sake of simplicity we assume that $I=\mathbb{N}$.
We use Corollary \ref{cor:1} together with the following observation. For each $t\in T$ points $(u_i t_i)_{i\in \mathbb{N}}$, $u\in U$
forms an ellipsoid $U_t=\{a\in \mathbb{R}^{\mathbb{N}}:\;\sum_{i\in \mathbb{N}}\frac{|u_i|^2}{|t_i|^2|x_i|^2}\leqslant 1\}$ and therefore by Proposition 2.5.2 in \cite{Tal1}
$$
\int^{\infty}_0 \sqrt{\log N(U,d_t,\varepsilon)}d\varepsilon \leqslant L\sum^{\infty}_{n=0} 2^{\frac{n}{2}}|x_{2^n}t_{2^n}|,
$$
where $L$ is a universal constant, assuming that we have rearranged $(x_i t_i)_{i\in \mathbb{N}}$ in such a way that the sequence $|x_i t_i|$, $i\in \mathbb{N}$ is non-increasing. 
It suffices to note that by the Schwartz inequality
\begin{align*}
&\sum^{\infty}_{n=0} 2^{\frac{n}{2}}(|x_{2^n}t_{2^n}|^{2})\leqslant 2|x_1t_1|+2\sum^{\infty}_{n=1} \left(\sum^{2^{n}}_{i=2^{n-1}+1}|x_{i}t_i|^2\right)^{\frac{1}{2}} \\
&\leqslant 2|x_1t_1|+2\sum^{\infty}_{n=1}\max_{2^{n-1}<i\leqslant 2^n}|x_i|\left(\sum^{2^n}_{i=2^{n-1}+1}|t_i|^2\right)^{\frac{1}{2}}\leqslant 2\|x\|_2\|t\|_2.
\end{align*}
Consequently, 
$$
\mathbf{E} \sup_{u\in U}\sup_{t\in T}|G_{t,u}|\leqslant K(\|x\|_{\infty}\gamma_2(T)+\Delta(T)\|x\|_2).
$$
It remains to apply (\ref{lasek}), i.e. $g(T)\sim \gamma_2(T)$.
\end{proof}
Note that $\|x\|_{\infty}=\sup_{u\in U}\|u\|_{\infty}$ and $\|x\|_2\sim \gamma_2(U,d)$.
It is clear the result can be slightly improved if $\mathcal{E}(U,d)<\infty$. Indeed similarly to (\ref{min3})
one can show that
$$
\mathcal{E}^{\tau}_{G,U}(T)\leqslant C2^{\frac{\tau}{2}}\sup_{t\in T}\|t\|_{\infty}\mathcal{E}(U,d)
$$
and hence by Corollary \ref{cor:1} for any set $U$ such that $\mathcal{E}(U,d)<\infty$
$$
\mathbf{E} \sup_{u\in U}\sup_{t\in T}|G(u,t)|\leqslant K\left(\|x\|_{\infty}g(T)+\sup_{t\in T}\|t\|_{\infty}\right).
$$
This is a sort of general rule that we expect certain regularity of $\mathcal{A}$, whereas
set $T$ is usually supposed to be unknown. 
\smallskip 

\noindent
We turn to show that Corollary \ref{cor:1} is sufficiently strong to answer both questions 
about VC classes and Fourier series, posed in the introduction, in the Gaussian type formulation.
Namely we  prove that these problems are related to certain properties of entropy functionals 
$\int^{\infty}_0 \sqrt{\log(N(\mathcal{A},d_t,\varepsilon))}d\varepsilon$, $t\in T$.
\smallskip

\noindent
We start from the result on VC classes of finite dimension. In this case $\mathcal{A}$ consists of $a\in \mathbb{R}^I$, of the form $a=1_A$ for some $A\subset I$
and $T$ is any subset of $\ell^2(I)$ that contains $0$.
By Theorem 14.12 in \cite{L-T} we have that for any $t\in \ell_2(I)$
such that $\|t\|_2=1$ and given VC class $\mathcal{A}$ of dimension $d$
$$
\log N(\mathcal{A},d_t,\varepsilon)\leqslant Ld\left(1+\log \frac{1}{\varepsilon}\right),\;\;0<\varepsilon<1.
$$ 
Consequently
$$
\sup_{t\in T}\int^{\infty}_0 \sqrt{\log N(\mathcal{A},d_t,\varepsilon)}d\varepsilon\leqslant \sqrt{Ld}\Delta(T)\int^1_0 (1+\log \frac{1}{\varepsilon})^{\frac{1}{2}}d\varepsilon\leqslant  M\sqrt{d}\Delta(T) 
$$
and hence $\mathcal{E}(\mathcal{A},d_t)\leqslant M\sqrt{d}\Delta(T)$ for any $t\in T$. 
Since clearly $\Delta(T)\leqslant \gamma_2(T)$ we may use Corollary \ref{cor:1} for $\tau=4$ and deduce 
$$
\mathbf{E} \sup_{a\in \mathcal{A}}\sup_{t\in T}|G(a,t)|\leqslant 64C(\gamma_2(T)+M\sqrt{d}\Delta(T))\leqslant L\sqrt{d}\gamma_2(T),
$$
which by (\ref{lasek}) implies (\ref{nier:4}) - the result we refer to. 
\smallskip

\noindent
Next step is to show that the Gaussian version of the problem on Fourier series
is also related to Corollary \ref{cor:1}. In this case $U$ consists of $u\in\mathbb{C}^{I}$ of the form $u_i=\chi_i(h)$ for $h\in G$, where $\chi_i$, $i\in I$
are characters on the compact Abelian group $G$ and $T$ is any subset of $\ell^2(I)$ that contains $0$.
Recall that the crucial observation for the study is that distances $d_t$, $t\in T$ defined in (\ref{charon}) are translationally invariant on the group $G$, i.e.
$$
d_t(f\cdot h,g \cdot h)=d_t(f,g)=(\sum_{i\in I}|t_i|^2|\chi_i(f)-\chi_i(g)|^2)^{\frac{1}{2}},\;\;\mbox{for any}\;\;f,g,h\in G.
$$ 
Therefore by the deep result of Fernique (Theorem 3.1.1 in \cite{Tal1})
we have 
$$
 K^{-1}\mathcal{E}(\mathcal{A},d_t)\leqslant \mathbf{E} \sup_{a\in \mathcal{A}} |G(a,t)| \leqslant K\mathcal{E}(\mathcal{A},d_t).
$$
Consequently by Corollary \ref{cor:1} and (\ref{lasek}) we can deduce (\ref{nier:6}) which is the Gaussian version of Theorem \ref{tw:2}. 
\smallskip

\noindent
The first new consequence of Corollary \ref{cor:1} concerns shattering dimension of $U$. Suppose that $U$
is the class of real functions bounded by $1$, i.e. $U\subset [-1,1]^I$.
We say that a subset $B$ of $I$ is $\varepsilon$-shattered if there exists a level function
$v$ on $B$ such that given any subset $A$ of $B$ one can find a function $u\in U$
with $u_i\leqslant v_i$ if $i\in A$ and $u_i\geqslant v_i+\varepsilon$ if $i\in B\backslash A$.
The shattering dimension of $U$ denoted by $\mathrm{vc}(U,\varepsilon)$ is the maximal cardinality of a set $\varepsilon$-shattered by $U$. The deep result of Mendelson and Vershynin \cite{M-V} is that
for any $t\in \ell_2(I)$ such that $\|t\|_2=1$ we have
\begin{equation}\label{nier:11}
\log N(U,d_t,\varepsilon)\leqslant L\mathrm{vc}(U,c\varepsilon)\log\left(\frac{2}{\varepsilon}\right),\;\;0<\varepsilon<1, 
\end{equation}
where $L$ and $c$ are positive absolute constants. This leads to 
the following corollary.
\begin{cor}\label{cor:2}
Suppose that $0\in T\subset \ell^2(I)$ and $\sup_{u\in U}\|u\|_{\infty}$. The following inequality holds
$$
\mathbf{E} \sup_{u\in U}\sup_{t\in T}|G(u,t)|\leqslant  
K\left(g(T)+\Delta(T)\int^1_0 \sqrt{\mathrm{vc}(U,\varepsilon)\log\left(2/\varepsilon\right)}d\varepsilon\right).
$$
\end{cor}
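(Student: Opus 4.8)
The plan is to read the statement as a straightforward corollary of Corollary \ref{cor:1} with $\tau=4$, where the only real content is converting the entropy term $\sup_{t\in T}\mathcal{E}(U,d_t)$ into the shattering integral by means of the Mendelson--Vershynin estimate (\ref{nier:11}). First I would observe that since $0\in T$ we have $G(u,0)=0$ for every $u$, so $\sup_{u\in U}\sup_{t\in T}|G(u,t)|\le \sup_{u,v\in U}\sup_{s,t\in T}|G(u,t)-G(v,s)|$, which is exactly the quantity bounded by Corollary \ref{cor:1}. Applying that corollary with $\tau=4$ produces an upper bound of the shape $L(\sup_{u\in U}\|u\|_\infty\,\gamma_2(T)+\sup_{t\in T}\mathcal{E}(U,d_t))$. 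Under the hypothesis $\sup_{u\in U}\|u\|_\infty\le 1$ (that is, $U\subset[-1,1]^I$, which is the regime in which $\mathrm{vc}(U,\varepsilon)$ was defined) the first term is at most $\gamma_2(T)\sim g(T)$ by (\ref{lasek}), which yields the $g(T)$ summand. It remains to estimate $\sup_{t\in T}\mathcal{E}(U,d_t)$.

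For the second term I would invoke the equivalence $\mathcal{E}(U,d_t)\sim\int_0^\infty\sqrt{\log N(U,d_t,\varepsilon)}\,d\varepsilon$ recorded in Corollary \ref{cor:1} and reduce to the unit sphere by scaling. Writing $s=t/\|t\|_2$, the identity $d_t=\|t\|_2\,d_s$ gives $N(U,d_t,\varepsilon)=N(U,d_s,\varepsilon/\|t\|_2)$, so after the substitution $\varepsilon=\|t\|_2\delta$
$$\int_0^\infty\sqrt{\log N(U,d_t,\varepsilon)}\,d\varepsilon=\|t\|_2\int_0^\infty\sqrt{\log N(U,d_s,\delta)}\,d\delta.$$
Since $0\in T$ we have $\|t\|_2=\|t-0\|_2\le\Delta(T)$, so it suffices to bound the unit-norm integral $\int_0^\infty\sqrt{\log N(U,d_s,\delta)}\,d\delta$ by a constant multiple of $\int_0^1\sqrt{\mathrm{vc}(U,\varepsilon)\log(2/\varepsilon)}\,d\varepsilon$.

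This last entropy estimate is where most of the care is needed. On $(0,1)$ I would insert (\ref{nier:11}) to get $\log N(U,d_s,\delta)\le L\,\mathrm{vc}(U,c\delta)\log(2/\delta)$, and then pass from $\mathrm{vc}(U,c\delta)$ to $\mathrm{vc}(U,\delta)$ through the change of variables $\eta=c\delta$; the monotonicity of $\varepsilon\mapsto\mathrm{vc}(U,\varepsilon)$ together with that of $\varepsilon\mapsto\log(2/\varepsilon)$ absorbs the factor $c$ and the shift inside the logarithm into an absolute constant. The genuinely delicate point is that (\ref{nier:11}) is stated only for $\|s\|_2=1$ and $0<\varepsilon<1$, whereas the $d_s$-diameter of $U$ can reach $2$ (because $U\subset[-1,1]^I$), so the integrand is supported on $[0,2]$ and the tail $\delta\in[1,2)$ must be treated separately. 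There I would use monotonicity, $\log N(U,d_s,\delta)\le\log N(U,d_s,1/2)\le L\,\mathrm{vc}(U,c/2)\log 4$, whence $\int_1^2\sqrt{\log N(U,d_s,\delta)}\,d\delta$ is bounded by a constant times $\sqrt{\mathrm{vc}(U,c/2)}$; this is in turn controlled by $\int_0^1\sqrt{\mathrm{vc}(U,\varepsilon)\log(2/\varepsilon)}\,d\varepsilon$ because the decreasing function $\mathrm{vc}$ satisfies $\int_0^{c/2}\sqrt{\mathrm{vc}(U,\varepsilon)\log(2/\varepsilon)}\,d\varepsilon\ge \frac{c}{2}\sqrt{\mathrm{vc}(U,c/2)\log 2}$.

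Combining the three contributions, taking the supremum over $t\in T$ (which only introduces the factor $\Delta(T)$ via $\|t\|_2\le\Delta(T)$), gives
$$\sup_{t\in T}\mathcal{E}(U,d_t)\le K\,\Delta(T)\int_0^1\sqrt{\mathrm{vc}(U,\varepsilon)\log(2/\varepsilon)}\,d\varepsilon,$$
and feeding this together with $\sup_{u\in U}\|u\|_\infty\,\gamma_2(T)\le g(T)$ back into Corollary \ref{cor:1} yields the asserted inequality. In summary, the whole argument is essentially bookkeeping once Corollary \ref{cor:1} and (\ref{nier:11}) are available; the only real subtleties are the scaling to unit norm, the conversion $\mathrm{vc}(U,c\varepsilon)\to\mathrm{vc}(U,\varepsilon)$, and the handling of the entropy tail on $[1,2)$ where (\ref{nier:11}) no longer applies.
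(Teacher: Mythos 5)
Your proposal is correct and follows exactly the route the paper takes: its proof of this corollary is the one-line instruction ``apply (\ref{nier:11}), then Corollary \ref{cor:1}, then (\ref{lasek})'', and your argument is precisely that chain with the implicit details made explicit. The points you flag as delicate (rescaling $t$ to unit norm to produce the $\Delta(T)$ factor, absorbing the constant $c$ in $\mathrm{vc}(U,c\varepsilon)$, and the entropy tail on $[1,2)$) are indeed left unstated in the paper but are handled correctly in your write-up.
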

\begin{proof}
It suffices to apply (\ref{nier:11}) then Corollary \ref{cor:1} and finally (\ref{lasek}).
\end{proof}
It is worth mentioning how to understand the shattering dimension of convex bodies.
Suppose that $U\subset [-1,1]^d$ is a convex and symmetric body then
$\mathrm{vc}(U,\varepsilon)$ is the maximal cardinality of a subset $J$ of $\{1,2,\ldots,d\}$ such that
$P_J(U)\supset [-\frac{\varepsilon}{2},\frac{\varepsilon}{2}]^J$, where $P_J$ is the orthogonal projection from $\mathbb{R}^d$ on $\mathbb{R}^J$. 
For example suppose that $U$ is a unit ball in $\mathbb{R}^d$ 
then $\mathrm{vc}(U,\varepsilon)=k-1$ for any $\varepsilon\in [\frac{2}{\sqrt{k}},\frac{2}{\sqrt{k-1}})$
and $k=1,2,\ldots d$ moreover $\mathrm{vc}(U,\varepsilon)=d$ for $\varepsilon<\frac{2}{\sqrt{d}}$. Consequently
$$
\int^1_0 \sqrt{\mathrm{vc}(U,\varepsilon)\log(2/\varepsilon)}d\varepsilon\leqslant  K\sqrt{d\log d}.
$$
Note that for $t\in \mathbb{R}^d$, $t_i=1/\sqrt{d}$ we have that $\int^{\infty}_0 \sqrt{\log N(U,d_t,\varepsilon)}d\varepsilon$ is comparable to $\sqrt{d}$
and hence the above estimate is not far from this answer.

\section{Bernoulli case}

Our next aim is to obtain a version of Corollary \ref{cor:1} in the setting of Bernoulli processes. 
We recall that by Bernoulli processes we mean
$$
X(t)=\sum_{i\in I}t_i\varepsilon_i,\;\;\mbox{for}\;\;t\in T\subset \ell^2(I),
$$
where $(\varepsilon_i)_{i\in I}$ is a Bernoulli sequence. Moreover we denoted
$$
b(T)=\mathbf{E} \sup_{t\in T}|X(t)|
$$ 
and if $0\in T$ then by Theorem \ref{tw:0} we have 
a geometrical characterization of $b(T)$. We turn to the analysis of Bernoulli processes
on product spaces, namely we consider
$$
X(u,t)=\sum_{i\in I}u_i t_i\varepsilon_i,\;\;t\in T,u\in U, 
$$
where $T\subset \mathbb{R}^I$ or $\mathbb{C}^{I}$, $U\subset \mathbb{R}^I$ or $\mathbb{C}^{I}$ and $(\varepsilon_i)_{i\in I}$ is a Bernoulli sequence. 
Our approach is to use Theorem \ref{tw:0} in order to extend Corollary \ref{cor:1} to the case of random signs. 
\begin{thm}\label{tw:5}
Suppose that $0\in T$. Then there exists $\pi:T\rightarrow \ell^2$ such that $\|\pi(t)\|_1\leqslant Lb(T)$ for all $t\in T$, $\pi(0)=0$ and
$$
\mathbf{E} \sup_{u\in U}\sup_{t\in T}|X(u,t)|\leqslant K\left(\sup_{u\in U}\|u\|_{\infty}b(T)+\sup_{t\in T}\mathcal{E}(U,d_{t-\pi(t)})\right),
$$
where
$$
\mathcal{E}(U,d_{t-\pi(t)})\sim \int^{\infty}_0 \sqrt{\log N(U,d_{t-\pi(t)},\varepsilon)}d\varepsilon.
$$
\end{thm}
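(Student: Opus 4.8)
The plan is to reduce the Bernoulli statement to the Gaussian Corollary \ref{cor:1} by means of the decomposition furnished by the Bernoulli Theorem. Since $0\in T$, I would first apply Theorem \ref{tw:0} to obtain a decomposition $T\subset T_1+T_2$ with $0\in T_1$, $0\in T_2$ and $\sup_{t\in T_1}\|t\|_1+\gamma_2(T_2)\leqslant Lb(T)$ (adjoining $0$ to $T_1$ costs nothing since $\|0\|_1=0$). For each $t\in T$ I then fix one admissible representation $t=\pi(t)+(t-\pi(t))$ with $\pi(t)\in T_1$ and $t-\pi(t)\in T_2$, and set $\pi(0)=0$. This defines the required map, and $\|\pi(t)\|_1\leqslant\sup_{s\in T_1}\|s\|_1\leqslant Lb(T)$ for every $t$.

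The decomposition splits the process as $X(u,t)=\sum_{i\in I}u_i\pi(t)_i\varepsilon_i+\sum_{i\in I}u_i(t-\pi(t))_i\varepsilon_i$, and I would estimate the two halves separately. The first half is purely deterministic via $\ell^1$-$\ell^\infty$ duality: pointwise in $\varepsilon$ one has $|\sum_{i\in I}u_i\pi(t)_i\varepsilon_i|\leqslant\|u\|_\infty\|\pi(t)\|_1$, so that $\mathbf{E}\sup_{u\in U}\sup_{t\in T}|\sum_{i\in I}u_i\pi(t)_i\varepsilon_i|\leqslant\sup_{u\in U}\|u\|_\infty\cdot Lb(T)$, which is already of the desired form. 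For the second half, reparametrizing by $s=t-\pi(t)$ and writing $\widetilde{T}=\{t-\pi(t):t\in T\}\subset T_2$ (note $0\in\widetilde{T}$), I would pass from random signs to Gaussians by the standard comparison used in the introduction (namely $\mathbf{E}|g_i|=\sqrt{2/\pi}$ together with Jensen's inequality), giving a factor $\sqrt{\frac{\pi}{2}}$ and the bound $\sqrt{\frac{\pi}{2}}\,\mathbf{E}\sup_{u\in U}\sup_{s\in\widetilde{T}}|G(u,s)|$.

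At this point Corollary \ref{cor:1}, applied to the pair $(U,\widetilde{T})$ and using $G(u,0)=0$ with $0\in\widetilde{T}$ to drop the increment, bounds the Gaussian quantity by $K\big(\sup_{u\in U}\|u\|_\infty\gamma_2(\widetilde{T})+\sup_{s\in\widetilde{T}}\mathcal{E}(U,d_s)\big)$. Two monotonicity remarks then complete the argument. First, $\gamma_2(\widetilde{T})\leqslant\gamma_2(T_2)\leqslant Lb(T)$, because intersecting an admissible partition of $T_2$ with the subset $\widetilde{T}$ only shrinks diameters. Second, since $t\mapsto t-\pi(t)$ maps $T$ onto $\widetilde{T}$, we have $\sup_{s\in\widetilde{T}}\mathcal{E}(U,d_s)=\sup_{t\in T}\mathcal{E}(U,d_{t-\pi(t)})$, which is precisely the entropy term appearing in the statement; its comparability with $\int_0^\infty\sqrt{\log N(U,d_{t-\pi(t)},\varepsilon)}\,d\varepsilon$ is (\ref{min2}). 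Summing the two halves yields the claimed inequality with a universal constant $K$.

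The step I expect to be the genuine obstacle is the decision to run Corollary \ref{cor:1} on the image set $\widetilde{T}$ rather than on the full $T_2$: a naive application to $T_2$ would leave the term $\sup_{s\in T_2}\mathcal{E}(U,d_s)$, which is in general strictly larger than $\sup_{t\in T}\mathcal{E}(U,d_{t-\pi(t)})$ and is not controlled by $b(T)$. Everything else is bookkeeping, provided one verifies carefully that the selection $\pi$ can be made with $\pi(0)=0$ and that the monotonicity of $\gamma_2$ and of $\mathcal{E}$ under passage to subsets is legitimate.
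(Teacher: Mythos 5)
Your proposal is correct and follows essentially the same route as the paper: apply Theorem \ref{tw:0} to split $T\subset T_1+T_2$, kill the $T_1$ part deterministically via $\ell^1$--$\ell^\infty$ duality, pass to Gaussians with the $\sqrt{\pi/2}$ comparison, and invoke Corollary \ref{cor:1} on the remaining set. The point you flag as the ``genuine obstacle'' --- running the corollary on the image set $\widetilde{T}=\{t-\pi(t):t\in T\}$ rather than on an a priori larger $T_2$ --- is exactly what the paper does by noting at the end that the decomposition may be chosen with $T_1=\{\pi(t):t\in T\}$ and $T_2=\{t-\pi(t):t\in T\}$.
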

\begin{proof}
Obviously we may assume that $b(T)=\mathbf{E}\sup_{t\in T}|\sum_{i\in I}t_i\varepsilon_i|<\infty$. Therefore by Theorem \ref{tw:0} there exists a decomposition $T\subset T_1+T_2$, $T_1,T_2\subset \ell_2(I)$
which satisfies $0\in T_2$ and
\begin{equation}\label{nier:12}
\max\left\{\sup_{t\in T_1}\|t\|_1,g(T_2)\right\}\leqslant Lb(T),
\end{equation}
where $L$ is a universal constant. Hence combining Corollary \ref{cor:1} with (\ref{nier:12})
\begin{align*}
& \mathbf{E}\sup_{u\in U}\sup_{t\in T}|X(u,t)|\\
&\leqslant C\left(\mathbf{E} \sup_{u \in U}\sup_{t\in T_1}|X(u,t)|+
\mathbf{E}\sup_{u\in U}\sup_{t\in T_2}|X(u,t)|\right)\\
&\leqslant C\left(\sup_{u\in U}\|u\|_{\infty}\sup_{t\in T_1}\|t\|_1+ \sqrt{\frac{\pi}{2}}\mathbf{E} \sup_{u\in U}\sup_{t\in T_2}|G(u,t)|\right)\\
&\leqslant  CL\left(\sup_{u\in U}\|u\|_{\infty}b(T)+\sup_{t\in T_2}\int^{\infty}_0 \sqrt{\log N(U,d_t,\varepsilon)}d\varepsilon\right).
\end{align*}
The decomposition of $T$ into $T_1+T_2$ can be defined in a way that $T_1=\{\pi(t):\;t\in T\}$ and
$T_2=\{t-\pi(t):\;t\in T\}$  where $\pi:T\rightarrow \ell_2$ is such that $\pi(0)=0$, $\|\pi(t)\|_1\leqslant Lb(T)$
and $\gamma_2(T_2)\leqslant Lb(T)$. It completes the proof.
\end{proof}
Our remarks on the entropy function from the previous chapter shows that Theorem \ref{tw:5} solves both our questions from the 
introduction. In fact we can easily extend our result for the functional shattering dimension.
\begin{cor}\label{cor:3}
Suppose that $\sup_{u\in U}\|u\|_{\infty}\leqslant 1$ and  $U$ is of $\varepsilon$-shattering dimension $\mathrm{vc}(\mathcal{A},\varepsilon)$ for $0<\varepsilon<1$, then
$$
\mathbf{E} \sup_{u\in U}\sup_{t\in T}|X(u,t)|\leqslant Kb(T) \int^1_0 \sqrt{\mathrm{vc}(U,\varepsilon)\log(2/\varepsilon)}d\varepsilon .
$$
\end{cor}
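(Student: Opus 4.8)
The plan is to feed the Mendelson--Vershynin entropy estimate (\ref{nier:11}) into Theorem \ref{tw:5}, exactly as (\ref{nier:11}) was fed into Corollary \ref{cor:1} to obtain Corollary \ref{cor:2}. First I would invoke Theorem \ref{tw:5}: since $0\in T$ and $\sup_{u\in U}\|u\|_{\infty}\leqslant 1$, it provides a map $\pi$ with $\pi(0)=0$, $\|\pi(t)\|_1\leqslant Lb(T)$, and
$$
\mathbf{E}\sup_{u\in U}\sup_{t\in T}|X(u,t)|\leqslant K\Big(b(T)+\sup_{t\in T}\mathcal{E}(U,d_{t-\pi(t)})\Big),
$$
where $\mathcal{E}(U,d_{t-\pi(t)})\sim\int_0^\infty\sqrt{\log N(U,d_{t-\pi(t)},\varepsilon)}\,d\varepsilon$. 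Thus everything reduces to estimating the entropy integral of $U$ for each of the distances $d_s$ with $s=t-\pi(t)$.

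The key step is a scaling reduction to the unit-norm case in which (\ref{nier:11}) applies. Writing $\sigma=\|s\|_2$ and $d_s=\sigma\,d_{s/\sigma}$ with $\|s/\sigma\|_2=1$, the identity $N(U,d_s,\varepsilon)=N(U,d_{s/\sigma},\varepsilon/\sigma)$ and the substitution $\eta=\varepsilon/\sigma$ give
$$
\int_0^\infty\sqrt{\log N(U,d_s,\varepsilon)}\,d\varepsilon=\sigma\int_0^\infty\sqrt{\log N(U,d_{s/\sigma},\eta)}\,d\eta.
$$
Now I would apply (\ref{nier:11}) to the unit vector $s/\sigma$. On $(0,1)$ this bounds the integrand by $\sqrt{L\,\mathrm{vc}(U,c\eta)\log(2/\eta)}$; a change of variable $\varepsilon=c\eta$ together with the monotonicity of $\mathrm{vc}(U,\cdot)$ absorbs the absolute constant $c$ and yields $M\int_0^1\sqrt{\mathrm{vc}(U,\varepsilon)\log(2/\varepsilon)}\,d\varepsilon$. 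Since $U\subset[-1,1]^I$ and $\|s/\sigma\|_2=1$, the $d_{s/\sigma}$-diameter of $U$ is at most $2$, so the tail $\eta\geqslant 1$ contributes nothing beyond $\eta=2$ and, on $(1,2)$, is controlled by $\sqrt{\mathrm{vc}(U,c)}$, again absorbable into the same integral.

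It then remains to bound the scalar factor $\sigma=\|t-\pi(t)\|_2$ uniformly in $t$. From the construction in Theorem \ref{tw:5} we have $t-\pi(t)\in T_2$ with $0\in T_2$ and $\gamma_2(T_2)\leqslant Lb(T)$; since $\gamma_2$ dominates the diameter (the term $n=0$ in the partition sum is $\Delta(T_2)$), $\|t-\pi(t)\|_2\leqslant\Delta(T_2)\leqslant\gamma_2(T_2)\leqslant Lb(T)$. Combining the three displays gives
$$
\sup_{t\in T}\mathcal{E}(U,d_{t-\pi(t)})\leqslant M'\,b(T)\int_0^1\sqrt{\mathrm{vc}(U,\varepsilon)\log(2/\varepsilon)}\,d\varepsilon,
$$
and feeding this back into Theorem \ref{tw:5} finishes the proof, the leftover $b(T)$ from the first term being absorbed since the displayed integral is bounded below by a universal constant whenever $U$ is nondegenerate. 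The only genuinely delicate point is the scaling normalization together with the bookkeeping of the constant $c$ and the diameter cutoff; everything else is a direct substitution into Theorem \ref{tw:5}.
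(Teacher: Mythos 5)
Your proposal is correct and follows essentially the same route as the paper: plug the Mendelson--Vershynin entropy bound (\ref{nier:11}), rescaled by $\|t-\pi(t)\|_2$, into Theorem \ref{tw:5}, and then bound $\sup_{t\in T}\|t-\pi(t)\|_2$ by $Lb(T)$ using the properties of the decomposition. You are in fact more explicit than the paper about the scaling normalization, the constant $c$, the diameter cutoff, and the absorption of the leftover $b(T)$ term, all of which the paper passes over silently.
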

\begin{proof}
Obviously by Theorem \ref{tw:5},
\begin{align*}
& \mathbf{E} \sup_{u\in U}\sup_{t\in T}|X(u,t)|\\
&\leqslant K\left(\sup_{u\in U}\|u\|_{\infty}b(T)+\sup_{t\in T}\|t-\pi(t)\|_2
\int^1_0 \sqrt{\mathrm{vc}(U,\varepsilon)\log(2/\varepsilon)}d\varepsilon\right),
\end{align*}
where $\|\pi(t)\|_1\leqslant Lb(T)$. Since
$$
\sup_{t\in T}\|t-\pi(t)\|_2\leqslant Lb(T)\;\;\mbox{and}\;\;\sup_{u\in U}\|u\|_{\infty}\leqslant 1
$$
it implies that
$$
\mathbf{E} \sup_{u\in U}\sup_{t\in T}|X(u,t)|\leqslant Kb(T)\int^1_0 \sqrt{\mathrm{vc}(U,\varepsilon)\log(2/\varepsilon)}d\varepsilon.
$$
\end{proof}
\begin{cor}\label{cor:4}
Suppose that $\sup_{u\in U}\|u\|_{\infty}\leqslant 1$ and $v_i\in F$, $i\in I$, then
$$
\mathbf{E} \sup_{u\in U}\left\|\sum_{i\in I}u_iv_i\varepsilon_i\right\|\leqslant K\mathbf{E} \left\|\sum_{i\in I}v_i\varepsilon_i\right\|\int^1_0\sqrt{\mathrm{vc}(U,\varepsilon)\log(2/\varepsilon)}d\varepsilon.
$$
\end{cor}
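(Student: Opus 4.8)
The plan is to deduce this vector-valued bound from the scalar statement of Corollary \ref{cor:3} by a routine duality argument, the whole analytic content having already been absorbed into that corollary. Since $F$ is separable, its closed dual unit ball contains a countable norming family $D=\{x^*_k:\;k\geqslant 1\}$, meaning $\|x\|=\sup_{k}|x^*_k(x)|$ for every $x\in F$; adding the zero functional we may assume $0\in D$. First I would attach to each $x^*\in D$ the scalar sequence $t(x^*)=(x^*(v_i))_{i\in I}$ and set $T=\{t(x^*):\;x^*\in D\}$. By construction $T$ is countable and $0\in T$, so it is a legitimate index set for the framework of the previous sections, provided one checks $T\subset\ell^2(I)$.

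Next I would rewrite both sides of the desired inequality through the scalar process $X(u,t)=\sum_{i\in I}u_it_i\varepsilon_i$. Using the norming property together with the continuity and linearity of each $x^*$,
$$
\left\|\sum_{i\in I}u_iv_i\varepsilon_i\right\|=\sup_{x^*\in D}\left|x^*\Big(\sum_{i\in I}u_iv_i\varepsilon_i\Big)\right|=\sup_{t\in T}\left|\sum_{i\in I}u_it_i\varepsilon_i\right|,
$$
whence $\mathbf{E}\sup_{u\in U}\|\sum_{i\in I}u_iv_i\varepsilon_i\|=\mathbf{E}\sup_{u\in U}\sup_{t\in T}|X(u,t)|$. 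Applying the same identity with the sequences $t$ themselves gives the matching formula for the benchmark quantity,
$$
b(T)=\mathbf{E}\sup_{t\in T}\left|\sum_{i\in I}t_i\varepsilon_i\right|=\mathbf{E}\sup_{x^*\in D}\left|x^*\Big(\sum_{i\in I}v_i\varepsilon_i\Big)\right|=\mathbf{E}\left\|\sum_{i\in I}v_i\varepsilon_i\right\|,
$$
so the two objects appearing in Corollary \ref{cor:3} are exactly the two sides of the claim.

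It then remains only to verify the standing hypothesis $T\subset\ell^2(I)$, and this is the one genuinely delicate point. I may assume the right-hand side $\mathbf{E}\|\sum_{i\in I}v_i\varepsilon_i\|$ is finite, for otherwise there is nothing to prove. For each fixed $x^*\in D$ and each finite $J\subset I$, the contraction principle yields $\mathbf{E}|\sum_{i\in J}x^*(v_i)\varepsilon_i|\leqslant\mathbf{E}\|\sum_{i\in J}v_i\varepsilon_i\|\leqslant\mathbf{E}\|\sum_{i\in I}v_i\varepsilon_i\|$, so the scalar Bernoulli series $\sum_{i\in I}x^*(v_i)\varepsilon_i$ has $L^1$-bounded partial sums and therefore converges almost surely; by the classical convergence criterion for random signs this forces $\sum_{i\in I}|x^*(v_i)|^2<\infty$. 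Hence every $t(x^*)$ lies in $\ell^2(I)$ and $T\subset\ell^2(I)$ as required. With $T$ now a valid index set containing $0$, and the hypothesis $\sup_{u\in U}\|u\|_{\infty}\leqslant1$ passing over unchanged, Corollary \ref{cor:3} gives
$$
\mathbf{E}\sup_{u\in U}\sup_{t\in T}|X(u,t)|\leqslant Kb(T)\int^1_0\sqrt{\mathrm{vc}(U,\varepsilon)\log(2/\varepsilon)}\,d\varepsilon,
$$
and substituting the two identities above produces precisely the asserted bound. I expect no further obstacle: once the vector-valued process is realised as a scalar process indexed by an admissible $T\ni0$ in $\ell^2(I)$, all the work has been done in Corollary \ref{cor:3}, and the only care needed is the passage to a countable norming family and the square-summability check just described.
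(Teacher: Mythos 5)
Your proof is correct and follows exactly the route the paper intends: Corollary \ref{cor:4} is stated there without proof as an immediate consequence of Corollary \ref{cor:3} via the standard dualization $T=\{(x^*(v_i))_{i\in I}\}$ already described in the introduction when passing from (\ref{nier:2}) to (\ref{nier:3}). Your additional verifications (countable norming family, $T\subset\ell^2(I)$ via a.s.\ convergence of the scalar Bernoulli series) are the right details to fill in and contain no gaps.
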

Note that in the same way as for the maximal inequality we may ask what is the right characterization of
$U\subset \mathbb{R}^{I}$, $\sup_{u\in U}\|u\|_{\infty}\leqslant 1$ for which the inequality
\begin{equation}\label{nier:13}
\mathbf{E} \sup_{u\in U}\left\|\sum_{i\in I}u_iX_i\right\|\leqslant K\left\|\sum_{i\in I}X_i\right\|,
\end{equation}
holds for any sequence of independent symmetric r.v.'s $X_i$, $i\in I$ which take values in a separable Banach space $F$.
With the same proof as the first part of Theorem \ref{tw:1} one can show 
that $U$ should satisfy the condition
$$
\sup_{0<\varepsilon<1}\varepsilon\sqrt{\mathrm{vc}(U,\varepsilon)}<\infty.
$$
On the other hand Corollary \ref{cor:4} implies that
$$
\int^1_0 \sqrt{\mathrm{vc}(U,\varepsilon)\log(2/\varepsilon)}d\varepsilon<\infty
$$
is a sufficient condition for the inequality (\ref{nier:13}). The exact answer for this question seems to stay at the moment beyond our reach.
\smallskip

\noindent
We revisit our toy example where $U$ is the ellipsoid
$$
U=\left\{u\in \mathbb{R}^{I}:\sum_{i\in I}\frac{|u_i|^2}{|x_i|^2}\leqslant 1\right\},
$$
where $|x_i|>0$ are given numbers. One can show the following result.
\begin{rem}\label{rem:2}
Suppose that $U$ is the usual ellipsoid. Then for any set $0\in T\subset \ell^2(I)$
$$
\mathbf{E} \sup_{u\in U}\sup_{t\in T}|X(u,t)|\leqslant K\|x\|_{2}b(T).
$$
\end{rem}
\begin{proof}
We may argue in a similar way as in Remark \ref{rem:1} using this time
Theorem \ref{tw:5} and obtain
$$
\mathbf{E} \sup_{u\in U}\sup_{t\in T}|X(u,t)|\leqslant K\left(\|x\|_{\infty}b(T)+\sup_{t\in T}\|t-\pi(t)\|_2 \|x\|_{2}\right).
$$
Since $\Delta(T)\leqslant Lb(T)$ and $\|\pi(t)\|_1\leqslant Lb(T)$ we get $\sup_{t\in T}\|t-\pi(t)\|_2\leqslant Lb(T)$
and therefore
$$
\mathbf{E} \sup_{u\in U}\sup_{t\in T}|X(u,t)|\leqslant K\|x\|_2b(T).
$$
\end{proof}
On the other hand by the Schwartz inequality 
$$
\mathbf{E} \sup_{u\in U}\sup_{t\in T}|X(u,t)|=\sup_{t\in T}\mathbf{E} \sup_{u\in U}|X(u,t)|=\sup_{t\in T}\|t x\|_2
$$
Consequently in this case the expectation of the supremum of the Bernoulli process over the product of index sets 
can be explained by the analysis of one of its marginal processes.
\smallskip

\noindent
It is worth mentioning that whenever there holds a comparability of moments like (\ref{nier:13}) 
one can deduce also the comparability of tails.
\begin{rem}\label{rem:3}
Suppose that $\sup_{u\in U}\|u\|_{\infty}\leqslant 1$.
If for any $X_i$, $i\in I$ symmetric independent random variables which take values in a separable Banach space $(F,\|\cdot\|)$
\begin{equation}\label{restol2}
\mathbf{E} \sup_{u\in U} \left\|\sum_{i\in I}u_i X_i\right\|\leqslant L\mathbf{E} \left\|\sum_{i\in I}X_i\right\|,
\end{equation}
then there exists an absolute constant $K$ such that
$$
\mathbf{P}\left(\sup_{u\in U} \left\|\sum_{i\in I} u_iX_i\right\|\geqslant K t\right) \leqslant K\mathbf{P}\left(\left\|\sum_{i\in I}X_i\right\|\geqslant t\right).
$$   
\end{rem}
\begin{proof}
It suffices to prove the inequality for $X_i=v_i\varepsilon_i$, where $v_i\in F$ and $\varepsilon_i$
are independent Bernoulli variables. The general result follows then from the Fubini theorem.
By the result of Dilworth and Montgomery-Smith \cite{D-M} for all $p\geqslant 1$
\begin{align*}
& \left\|\sup_{u\in U}\left\|\sum_{i\in I}u_iv_i\varepsilon_i\right\|\right\|_p\\
&\leqslant C\left(\left\|\sup_{u\in U}\left\|\sum_{i\in I}u_iv_i\varepsilon_i\right\|\right\|_1+
\sup_{u\in U}\sup_{\|x^{\ast}\|\leqslant 1}\left\|\sum_{i\in I}u_ix^{\ast}_i(v_i)\varepsilon_i\right\|_p \right).
\end{align*}
Therefore by the Markov inequality and the assumption $\sup_{u\in U}\|u\|_{\infty}\leqslant1$
we obtain for all $p\geqslant 1$
\begin{align}
\nonumber&\mathbf{P}\left(\sup_{u\in U}\left\|\sum_{i\in I}u_iv_i\varepsilon_i\right\|\geqslant C
\left(\mathbf{E}\left\|\sum_{i\in I}v_i\varepsilon_i\right\|+\sup_{\|x^{\ast}\|\leqslant 1}\left\|\sum_{i\in I}x^{\ast}(v_i)\varepsilon_i\right\|_p\right)\right)\\
\label{restol3}&\leqslant e^{-p}.
\end{align}
On the other hand it is known (e.g. \cite{Lat3}) that for any functional $x^{\ast}\in F^{\ast}$
\begin{equation}\label{restol4}
\mathbf{P}\left(\left|\sum_{i\in I}x^{\ast}(v_i)\varepsilon_i\right|\geqslant M^{-1}\left\|\sum_{i\in I}x^{\ast}v_i\varepsilon_i\right\|_p\right)\geqslant \min\left\{c,e^{-p}\right\},
\end{equation}
where $M\geqslant 1$. Hence 
\begin{align*}
& \mathbf{P}\left(\sup_{u\in U}\left\|\sum_{i\in I}u_iv_i\varepsilon_i\right\|\geqslant C
\left(\mathbf{E}\left\|\sum_{i\in I}v_i\varepsilon_i\right\|+Mt\right)\right)\\
&\leqslant  \sup_{\|x^{\ast}\|\leqslant 1}
c^{-1} \mathbf{P}\left(\left|\sum_{i\in I}x^{\ast}(v_i)\varepsilon_i\right|>t\right)\leqslant c^{-1}\mathbf{P}\left(\left\|\sum_{i\in I}v_i\varepsilon_i\right\|>t\right).
\end{align*}
We end the proof considering two cases. If $t\geqslant M^{-1}\mathbf{E}\left\|\sum_{i\in I}v_i\varepsilon_i\right\|$ then
\begin{align*}
& \mathbf{P}\left(\sup_{u\in U}\left\|\sum_{i\in I}u_iv_i\varepsilon_i\right\|\geqslant 2CMt\right) \leqslant  c^{-1} \mathbf{P}\left(\|\sum_{i\in I}v_i\varepsilon_i\|> t\right).
\end{align*}
If $t\leqslant M^{-1}\mathbf{E}\|\sum_{i\in I}v_i\varepsilon_i\|$ then by Paley-Zygmund inequality
and the Kahane inequality with the optimal constant \cite{L-O}
$$
\mathbf{P}\left(\left\|\sum_{i\in I}v_i\varepsilon_i\right\|>t\right)\geqslant 
M^{-2}\frac{\left(\mathbf{E}\left\|\sum_{i\in I}\varepsilon_iv_i\right\|\right)^2}{\mathbf{E}\left\|\sum_{i\in I}\varepsilon_iv_i\right\|^2}\geqslant \frac{1}{2}M^{-2}.
$$
It shows that
$$
 \mathbf{P}\left(\sup_{u\in U}\left\|\sum_{i\in I}u_iv_i\varepsilon_i\right\|\geqslant K t\right)\leqslant K\mathbf{P}\left(\left\|\sum_{i\in I}v_i\varepsilon_i\right\|>t\right),
$$
which completes the proof.
\end{proof}

\section{Applications}

First application we show concerns processes on $\mathcal{A}=[0,1]$ but with values in $\mathbb{R}^{I}$. In
order to discuss whether or not $X(a)$, $a\in \mathcal{A}$ has its paths in $\ell^2(I)$ space
we have to verify the condition $\|X(a)\|_2<\infty$ a.s. for all $a\in \mathcal{A}$. This is the same
question as
$$
\sup_{\|x^{\ast}\|\leqslant 1}\sup_{a\in \mathcal{A}}\langle x^{\ast},X(a)\rangle<\infty.
$$
Hence it suffices to check the condition  $\mathbf{E} \sup_{a\in \mathcal{A}}\sup_{t\in T}|X(a,t)|<\infty$, where
$T=\{t\in\ell^2(I):\;\|t\|_2\leqslant 1\}$ and
$$
X(a,t)=\langle t,X(a)\rangle,\;\;t\in T,a\in \mathcal{A}.
$$
Note that in this way we match each random vector $X(a)$ with the process $X(a,t)$, $t\in T$.
By Theorem \ref{tw:4} (note that $X(0,s)=0$)
$$
\mathbf{E} \sup_{a\in \mathcal{A}}\sup_{t\in T}|X(a,t)|\leqslant 32(\gamma^{\tau}_{X,\mathcal{A}}(T)+\mathcal{E}^{\tau}_{X,T}(\mathcal{A})).
$$ 
In the described setting we usually expect that
\begin{equation}\label{efka1}
q_{n,a}(s,t)\leqslant q_{n,1}(s,t),\;\;a\in \mathcal{A}
\end{equation}
which means that on average the increments increases with time.
Condition (\ref{efka1}) yields $\gamma^{\tau}_{X,\mathcal{A}}(T)\leqslant \gamma^{\tau}_{X(1)}(T)$.
Under certain assumptions we may also expect that $\gamma^{\tau}_{X(1)}(T)$
is equivalent to $\mathbf{E} \|X(1)\|_2$. For example this is the case when $X(1)$ is a centred Gaussian
vector and also if $X(1)$ consists of entries  $X(1)_i$, $i\in I$
that are independent random variables that satisfy some technical assumptions as stated
in \cite{Lat1}. Moreover if there exists increasing family of functions
$\eta_n:\mathbb{R}_{+}\rightarrow \mathbb{R}_{+}$ which are continuous, increasing, $\eta_n(0)=0$
such that for all $t\in T$ and $a,b\in\mathcal{A}$
\begin{equation}\label{efka2}
q_{n,t}(a,b)\leqslant \eta_n(|a-b|)\;\;\mbox{and}\;\;\sum^{\infty}_{n=0}\eta^{-1}_{n+\tau}(N_n)<\infty
\end{equation}
then
$$
\mathcal{E}_{X,T}^{\tau}(\mathcal{A})\leqslant \sum^{\infty}_{n=0} \eta^{-1}_{n+\tau}(N_{n})<\infty.
$$ 
In this way we obtain the following remark which is a generalization results from \cite{Fer1,Fer}.
\begin{rem}\label{rem:4}
Suppose that $X(a,t)$, $a\in \mathcal{A}$, $t\in T$ satisfies (\ref{efka1}), (\ref{efka2}) and $\mathbf{E}\|X(1)\|_2$
is comparable with $\gamma_{X(1)}(T)$ then
$$
\mathbf{E}\sup_{a\in\mathcal{A}}\sup_{t\in T}|X(a,t)|\leqslant K(\mathbf{E}\|X(1)\|_2+1).
$$
\end{rem}
The second application we would like to discuss concerns empirical processes. Let $(\mathcal{X},\mathcal{B})$ be a measurable space, $\mathcal{F}$ be a countable family of measurable functions $f:\mathcal{X}\rightarrow \mathbb{R}$ such that $0\in \mathcal{F}$and $X_1,X_2,\ldots,X_N$ be family of independent random variables that satisfy
\begin{equation}\label{efka7}
\mathbf{P}\left(|(f-g)(X_i)|>d_1(f,g)t+d_2(f,g)t^{\frac{1}{2}}\right)\leqslant 2\exp(-t),\;\;\mbox{for all}\;t>0.
\end{equation}
We will analyse the case when $d_2(f,g)\geqslant d_1(f,g)$. By Exercise 9.3.5 in \cite{Tal1} for any centred independent random variables $Y_1,Y_2,\ldots,Y_n$ which satisfy
$$
\mathbf{P}\left(|Y_i|\geqslant At+Bt^{\frac{1}{2}}\right)\leqslant 2\exp(-t)\;\;\mbox{for all}\;i=1,2,\ldots,N,
$$
where $A\leqslant B$ and for any numbers $u_1,u_2,\ldots,u_N$ we have 
\begin{equation}\label{efka3}
\mathbf{P}\left(\left|\sum^N_{i=1}u_iY_i\right|\geqslant  L\left(A\|u\|_{\infty}t+B\|u\|_2 t^{\frac{1}{2}}\right)\right)\leqslant 2\exp(-t)\;\;\mbox{for all}\;
t > 1.
\end{equation}
Observe that if we define    
$$
X(u,f)=\frac{1}{\sqrt{N}}\sum^N_{i=1}\varepsilon_i u_if(X_i),\;\;u\in U,f\in \mathcal{F},
$$
where $U$ is a unit ball in $\mathbb{R}^N$ and $(\varepsilon_i)^N_{i=1}$ is a Bernoulli sequence independent of $X_i$, $i=1,2,\ldots,N$ then
$$
\mathbf{E}\sup_{u\in U}\sup_{f\in \mathcal{F}}|X(u,f)|=\mathbf{E} \sup_{f\in \mathcal{F}}\left(\frac{1}{N}\sum^N_{i=1}|f(X_i)|^2\right)^{\frac{1}{2}}.
$$
Clearly by (\ref{efka3}) used to $Y_i=\varepsilon_i(f-g)(X_i)$ we get for all $t>1$
\begin{align}
&  \nonumber \mathbf{P}\left(|X(u,f)-X(u,g)|\geqslant \frac{L}{\sqrt{N}}\left(d_1(f,g)\|u\|_{\infty}t+d_2(f,g)\|u\|_2t^{\frac{1}{2}}\right)\right)\\
&\label{efka5}\leqslant 2\exp(-t).
\end{align}
Thus in particular we can use
$$
q_{n,u}(f,g)= L(d_1(f,g)\|u\|_{\infty}2^{n}+d_2(f,g)\|u\|_2 2^{\frac{n}{2}}).
$$
Then
\begin{equation}\label{efka4}
q_{n,U}(f,g)=\frac{L}{\sqrt{N}}(d_1(f,g)2^{n}+d_2(f,g)2^{\frac{n}{2}}).
\end{equation}
Let
$$
\gamma_i(\mathcal{F},d_i)=\inf_{\mathcal{A}}\sup_{f\in \mathcal{F}}\sum^{\infty}_{n=0}2^{\frac{n}{i}}\Delta_i(A_n(f)),
$$
where the infimum is taken over all admissible partitions $\mathcal{A}=(\mathcal{A}_n)_{n\geqslant 0}$ and $\Delta_i(A)=\sup_{f,g\in A}d_i(f,g)$. 
It is easy to construct an admissible partitions which works  for both $\gamma_1(\mathcal{F},d_1)$ and $\gamma_2(\mathcal{F},d_2)$.
Namely we consider partitions $\mathcal{A}^1=(\mathcal{A}_n^1)_{n\geqslant 0}$ and $\mathcal{A}^2=(\mathcal{A}^2_n)_{n\geqslant 0}$ such that
$$
(1+\varepsilon)\gamma_2(\mathcal{F},d_i)\geqslant  \sup_{f\in \mathcal{F}}\sum^{\infty}_{n=0}2^{\frac{n}{i}}\Delta_i(A^i_n(f)),\;\;i=1,2,
$$
for some arbitrary small $\varepsilon>0$ and then define $\mathcal{A}=(\mathcal{A}_n)_{n\geqslant 0}$ by $\mathcal{A}_{n}=\mathcal{A}^1_{n-1}\cap \mathcal{A}^2_{n-1}$ for $n\geqslant 1$
and $\mathcal{A}_0=\{\mathcal{F}\}$. Obviously $\mathcal{A}$ is admissible, moreover,
\begin{equation}\label{efka8}
\sup_{f\in \mathcal{F}}\sum^{\infty}_{n=0}2^{\frac{n}{i}}\Delta_i(A_n(f)) \leqslant (1+\varepsilon)2^{\frac{1}{i}}\gamma_i(\mathcal{F},d_i),\;\;i=1,2.
\end{equation}
Using the partition $\mathcal{A}$ we derive from (\ref{efka4})
$$
\gamma^{\tau}_{X,\mathcal{A}}(\mathcal{F})\leqslant \frac{2^{\tau+1}(1+\varepsilon)}{\sqrt{N}}(\gamma_1(\mathcal{F},d_1)+\gamma_2(\mathcal{F},d_2)).
$$
On the other hand using that $X(u,f)-X(v,f)=X(u-v,f)-X(0,f)$ similarly to (\ref{efka5}) we get for all $t>1$
\begin{align*}
&\mathbf{P}\left(|X(u,f)-X(v,f)|\geqslant \frac{L}{\sqrt{N}}\left(d_1(f,0)\|u-v\|_{\infty}t+d_2(f,0)\|u-v\|_2 t^{\frac{1}{2}}\right)\right)\\
&\leqslant 2\exp(-t).
\end{align*}
Hence we may define
\begin{equation}\label{yargi}
q_{n,f}(u,v)=\frac{L}{\sqrt{N}}\left(d_1(f,0)\|u-v\|_{\infty}2^{n}+d_2(f,0)\|u-v\|_2 2^{\frac{n}{2}}\right).
\end{equation}
We aim to compute entropy numbers $e_{n,f}$, $n\geqslant 0$. Let $n_0\geqslant 1$ be such that $2^{n_0}\geqslant N\geqslant 2^{n_0-1}$.
Our first claim is that for any $n\geqslant n_0$ and suitably chosen $\sigma\geqslant 1$ it is possible to cover $B_N(0,1)$, a unit ball in $\mathbb{R}^N$, by at most $N_{n+\sigma}$ cubes of $\ell^{\infty}$
diameter at most $N^{-\frac{1}{2}}(N_n)^{-\frac{1}{N}}$. Indeed we can apply the volume type argument.
It is possible to cover $B_N(0,1)$ with $M$ disjoint cubes of $\ell^{\infty}$ diameter $t$ with disjoint interiors in $B_N(0,1+t\sqrt{N})$. Since $|B_N(0,1)|\sim (C/\sqrt{N})^N$ we get
$$
M\leqslant \frac{|B_N(0,1+t\sqrt{N})|}{t^N}\leqslant \left(\frac{C}{\sqrt{N}}\left(\frac{1+t\sqrt{N}}{t}\right)\right)^N\leqslant C^N\left(1+\frac{1}{t\sqrt{N}}\right)^N.
$$
We may choose $t=N^{-\frac{1}{2}}N_n^{-\frac{1}{N}}$, then 
$$
C^N\left(1+\frac{1}{t\sqrt{N}}\right)^N= C^N\left(1+N_n^{\frac{1}{N}}\right)^N\leqslant (N_{n+\sigma})^{\frac{1}{N}},
$$
where the last inequality uses the assumption that $n\geqslant n_0$.
In this way we covered $B_N(0,1)$ by at most $N_{n+\sigma}$ sets of $\ell_{\infty}$ diameter
at most $N^{-\frac{1}{2}}(N_n)^{-\frac{1}{N}}$ and $\ell_2$ diameter at most $N_n^{-\frac{1}{N}}$.
By (\ref{yargi}) we infer the following entropy bound
$$
e^{\tau}_{n+\sigma,f}\leqslant 
L2^{\tau}\left(d_1(f,0)N^{-1}(N_n)^{-\frac{1}{N}}+d_2(f,0)N^{-\frac{1}{2}}(N_n)^{-\frac{1}{N}}\right).
$$
We recall that constant $L$ is absolute but may change its value from line to line up to a numerical factor.  
This implies the bound 
\begin{align*}
& \sum^{\infty}_{n=n_0} e^{\tau}_{n+\sigma,f}\\
&\leqslant L2^{\tau}\left(d_1(f,0)N^{-1}(N_n)^{-\frac{1}{N}}+d_2(f,0)N^{-\frac{1}{2}}(N_n)^{-\frac{1}{N}}\right)\\
&\leqslant L2^{\tau}\left(d_1(f,0)+d_2(f,0)\right).	
\end{align*}
The second step is to consider $n\leqslant n_0/2 +\sigma$.
In this case we can simply use the trivial covering of $B_N(0,1)$
by a single set which obviously has $\ell^{\infty}$ and $\ell^2$ diameter equal $2$ and hence
$$
e^{\tau}_{n,f}=\frac{L2^{\tau}}{\sqrt{N}}(d_1(f,0)2^{n}+d_2(f,0)2^{\frac{n}{2}})
$$
and 
$$
\sum^{n_0/2+\sigma}_{n=0}e^{\tau}_{n,f}\leqslant L2^{\tau}\left(d_1(f,0)+d_2(f,0)\right).
$$
The most difficult case is when $n_0/2+\sigma\leqslant n\leqslant n_0+\sigma$.
In this setting we will cover $B_N(0,1)$ with cubes of $\ell^{\infty}$ diameter $2t$, where 
on $t=\frac{1}{\sqrt{m_n}}$ and $m_n\leqslant N$. We will not control $\ell^2$ diameter, we simply
use that it is always bounded by $2$. Note that if $x\in B_N(0,1)$
then only on at most $m_n$ coordinates such that $|x_i|>t$. Therefore we can cover $B_N(0,1)$
with cubes in $\mathbb{R}^N$ of $\ell^{\infty}$ diameter $2t$ if for each subset $J\subset \{1,2,\ldots,N\}$ 
such that $|J|=m_n$ we cover $B_J(0,1)\subset \mathbb{R}^J$ with
cubes in $\mathbb{R}^J$ of $\ell^{\infty}$ diameter $2t$. In this way we cover all the cases where 
all coordinates but those in $J$ stay in the cube $[-t,t]^{J^c}$. By our volume argument one needs at most
$$
M_J\leqslant C^J\left(1+\frac{2t}{\sqrt{|J|}}\right)^{|J|}=C^{m_n}\left(1+\frac{1}{2t\sqrt{m_n}}\right)^{m_n}
$$
$2t$-cubes in $\mathbb{R}^J$ to cover $B_J(0,1)$. Our choice of $t$ guarantees that
$M_J\leqslant (3c/2)^{m_n}$. Therefore one needs $\binom{N}{m_n} (3c/2)^{m_n}$ cubes of $\ell^{\infty}$ diameter $2t$
to cover $B_N(0,1)$. We require that
$$
\binom{N}{m_n} \left(\frac{3c}{2}\right)^{m_n}\leqslant N_{n+\sigma}.
$$
It remains to find $m_n$ that satisfies the above inequality. First observe that
$$
\binom{N}{m_n} \leqslant \left(\frac{eN}{m_n}\right)^{m_n}=\exp(m_n\log(eN/m_n)).
$$
Following Talagrand we define $m_n$ as the smallest integer such that
$$
2^{n-\sigma}\leqslant m_n\log(eN/m_n).
$$
Clearly if $n\geqslant n_0/2+\sigma$ then $m_n>1$ and thus
by Lemma 9.3.12 in \cite{Tal1} we deduce that $m_n\log(eN/m_n)\leqslant 2^{n-\sigma+1}$ and hence
$$
\binom{N}{m_n} \left(\frac{3c}{2}\right)^{m_n}\leqslant 
\exp(2^{n-\sigma+1})\left(\frac{3c}{2}\right)^{m_n}\leqslant N_{n+\sigma},
$$
for sufficiently large $\sigma$. Again by Lemma 9.3.12 in \cite{Tal1} we have $\frac{1}{8}m_{n+1}\leqslant m_n$ for all $n\in \{n_0/2+\sigma,\ldots n_0+\sigma-1\}$ and $m_{n_0+\sigma}=2^{n_0}\geqslant N$. It implies that
\begin{equation}\label{nuwa}
m_n\geqslant N\left(\frac{1}{8}\right)^{n_0+\sigma-n}.
\end{equation}
Recall that each of the covering cubes has $\ell^2$
diameter $2$ and therefore by the definition of $m_n$ and then by (\ref{nuwa})
\begin{align*}
& e^{\tau}_{n,f}\leqslant \frac{L2^{\tau}}{\sqrt{N}}\left(d_1(f,0)\frac{2^{n}}{\sqrt{m_n}}+d_2(f,0)2^{\frac{n}{2}}\right)
 \\
&\leqslant L 2^{\tau+\frac{n-n_0}{2}}\left(d_1(f,0)\sqrt{\log(eN/m_n)}+d_2(f,0)\right)\\
&\leqslant L2^{\tau+\frac{n-n_0}{2}}(d_1(f,0)\sqrt{1+(n_0+\sigma-n)\log(8)}+d_2(f,0)).
\end{align*}
Again we derive the bound
$$
\sum^{n_0+\sigma}_{n=n_0/2+\sigma} e^{\tau}_{n,f}\leqslant L2^{\tau}(d_1(f,0)+d_2(f,0)).
$$
We have established that
$$
\mathcal{E}^{\tau}_{X,f}(U)=\sum^{\infty}_{n=0}e^{\tau}_{n,f}\leqslant L2^{\tau}(d_1(f,0)+d_2(f,0))
$$
and consequently
$$
\mathcal{E}^{\tau}_{X,\mathcal{F}}(U)=\sup_{f\in \mathcal{F}}\mathcal{E}^{\tau}_{X,f}(U)\leqslant L2^{\tau}\sup_{f\in \mathcal{F}}(d_1(f,0)+d_2(f,0)).
$$
By Theorem \ref{tw:4} we get
\begin{align*}
& \mathbf{E}\sup_{f\in \mathcal{F}}(\frac{1}{\sqrt{N}}
\sum^N_{i=1}|f(X_i)|^2)^{\frac{1}{2}}\\
& \leqslant K\left(\frac{1}{\sqrt{N}}(\gamma_1(\mathcal{F},d_1)+\gamma_2(\mathcal{F},d_2))+\Delta_1(\mathcal{F})+\Delta_2(\mathcal{F})\right).
\end{align*}
Note that our assumption that $d_2$ dominates $d_1$ is not necessary for the result. Clearly using $\bar{d}_2=\max(d_1,d_2)$ instead of $d_2$ we only have to observe that for our admissible partition $\mathcal{A}$ which works for $\gamma_1(\mathcal{F},d_1)$ and $\gamma_2(\mathcal{F},d_2)$ in the sense of (\ref{efka8})
one can use the following inequality
$$
\sum^{\infty}_{n=0}2^{\frac{n}{2}}\bar{\Delta}_2(A_n(f))\leqslant \sum^{\infty}_{n=0}2^n\Delta_1(A_n(f))+\sum^{\infty}_{n=0}2^{\frac{n}{2}}\Delta_2(A_n(f)),
$$
where $\bar{\Delta}_2(A)$ is the diameter of $A$ with respect to $\bar{d}_2$ distance.
In the same way $\bar{\Delta}_2(\mathcal{F})\leqslant \Delta_1(\mathcal{F})+\Delta_2(\mathcal{F})$.
We have proved the following result.
\begin{thm}\label{tw:6}
Suppose that $0\in \mathcal{F}$ and $\mathcal{F}$ satisfies (\ref{efka7}). Then
\begin{align*}
&\mathbf{E}\sup_{f\in \mathcal{F}}\frac{1}{\sqrt{N}}\left(\sum^N_{i=1}|f(X_i)|^2\right)^{\frac{1}{2}}\\
&\leqslant K\left(\frac{1}{\sqrt{N}}\left(\gamma_1(\mathcal{F},d_1)+\gamma_2(\mathcal{F},d_2)\right)+\Delta_1(\mathcal{F})+\Delta_2(\mathcal{F})\right).
\end{align*}
\end{thm}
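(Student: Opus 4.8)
The plan is to recognize the sum of squares as the supremum of a product process and then invoke Theorem \ref{tw:4}. First I would introduce the auxiliary process $X(u,f)=\frac{1}{\sqrt{N}}\sum_{i=1}^{N}\varepsilon_i u_i f(X_i)$ indexed by $u\in U=B_N(0,1)$ and $f\in\mathcal{F}$, with an independent Bernoulli sequence $(\varepsilon_i)$. The decisive feature of this choice is that, conditionally on everything, $\sup_{u\in U}|X(u,f)|=\bigl(\tfrac{1}{N}\sum_{i=1}^{N}|f(X_i)|^2\bigr)^{1/2}$, so the left-hand side of the theorem equals $\mathbf{E}\sup_{u\in U}\sup_{f\in\mathcal{F}}|X(u,f)|$. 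Since $X(0,f)=0$ and $0\in\mathcal{F}$, this quantity is dominated by $\mathbf{E}\sup_{u,v}\sup_{f,g}|X(u,f)-X(v,g)|$, which is precisely what Theorem \ref{tw:4} controls. The task thereby reduces to estimating the two functionals $\gamma^{\tau}_{X,U}(\mathcal{F})$ and $\mathcal{E}^{\tau}_{X,\mathcal{F}}(U)$.

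For the admissible distances I would feed the moment assumption (\ref{efka7}) into the linear-combination tail bound (\ref{efka3}), applied with $Y_i=\varepsilon_i(f-g)(X_i)$, to obtain the mixed exponential--subgaussian estimate (\ref{efka5}). This produces the marginal distances $q_{n,u}(f,g)=L\bigl(d_1(f,g)\|u\|_{\infty}2^{n}+d_2(f,g)\|u\|_2 2^{n/2}\bigr)$ on $\mathcal{F}$ and, after a supremum over $U$, the distance $q_{n,U}$ of (\ref{efka4}). To bound $\gamma^{\tau}_{X,U}(\mathcal{F})$ I would construct one admissible partition of $\mathcal{F}$ that is simultaneously nearly optimal for both chaining functionals $\gamma_1(\mathcal{F},d_1)$ and $\gamma_2(\mathcal{F},d_2)$, by intersecting the two separately optimal partitions level by level; combined with (\ref{efka4}) this yields $\gamma^{\tau}_{X,U}(\mathcal{F})\leqslant \frac{C}{\sqrt{N}}\bigl(\gamma_1(\mathcal{F},d_1)+\gamma_2(\mathcal{F},d_2)\bigr)$.

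The genuinely hard part is the entropy term $\mathcal{E}^{\tau}_{X,\mathcal{F}}(U)$, which demands covering-number estimates for the Euclidean ball $B_N(0,1)$ in the mixed $\ell^{\infty}/\ell^2$ geometry of $q_{n,f}$ from (\ref{yargi}). I would split into three regimes according to the size of $n$ relative to $n_0$, where $2^{n_0}\sim N$. For large $n\geqslant n_0$ a direct volumetric covering by $\ell^{\infty}$-cubes of side $\sim N^{-1/2}N_n^{-1/N}$ suffices; for small $n\leqslant n_0/2+\sigma$ the trivial single-set covering of diameter $2$ already contributes a summable amount. The delicate intermediate range $n_0/2+\sigma\leqslant n\leqslant n_0+\sigma$ is where I expect the main obstacle, and there I would follow Talagrand's sparse-covering idea: cover $B_N(0,1)$ by cubes of side $\sim 1/\sqrt{m_n}$, using that a unit vector exceeds $t=1/\sqrt{m_n}$ on at most $m_n$ coordinates, and choosing $m_n$ as the smallest integer with $2^{n-\sigma}\leqslant m_n\log(eN/m_n)$. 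Controlling $m_n$ both from above, for the cardinality bound $\binom{N}{m_n}(3c/2)^{m_n}\leqslant N_{n+\sigma}$, and from below via (\ref{nuwa}), through Lemma 9.3.12 in \cite{Tal1}, makes this range sum to $L2^{\tau}\bigl(d_1(f,0)+d_2(f,0)\bigr)$.

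Summing the three regimes gives $\mathcal{E}^{\tau}_{X,\mathcal{F}}(U)\leqslant L2^{\tau}\sup_{f\in\mathcal{F}}\bigl(d_1(f,0)+d_2(f,0)\bigr)$, and since $0\in\mathcal{F}$ forces $\sup_{f}d_i(f,0)\leqslant \Delta_i(\mathcal{F})$, I would finally substitute both the $\gamma$-bound and this $\mathcal{E}$-bound into Theorem \ref{tw:4}, collecting the $2^{\tau}$ and $N^{-1/2}$ factors into the constant $K$, to arrive at the stated inequality. The dominance hypothesis $d_2\geqslant d_1$ is only a convenience: replacing $d_2$ by $\bar d_2=\max(d_1,d_2)$ and using $2^{n/2}\bar\Delta_2\leqslant 2^{n}\Delta_1+2^{n/2}\Delta_2$ on the common partition removes it at no cost.
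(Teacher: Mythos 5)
Your proposal is correct and follows essentially the same route as the paper: the same product process $X(u,f)$ over $U=B_N(0,1)$, the same reduction to Theorem \ref{tw:4}, the same intersected partition controlling $\gamma_1$ and $\gamma_2$ simultaneously, the same three-regime entropy estimate for the ball (including Talagrand's choice of $m_n$ in the intermediate range), and the same removal of the $d_2\geqslant d_1$ assumption via $\bar d_2=\max(d_1,d_2)$. No gaps to report.
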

The result is due to Mendelson and Paouris \cite{Me-P} (see Theorem 9.3.1 in \cite{Tal1}) and concerns slightly more general situation. The proof we have shown is different and much less technically involved.

\end{document}